\newtheorem*{cor}{Corollary}
\newtheorem*{lem}{Lemma}
\newtheorem*{prop}{Proposition}
\theoremstyle{definition}
\theoremstyle{definition}
\newtheorem*{thm}{Theorem}
\newtheorem*{rem}{Remark}
\newenvironment{pf}{\proof}{\endproof}
\newcounter{cnt}
\newenvironment{enumerit}{\begin{list}{{\hfill\rm(\roman{cnt})\hfill}}{%
\settowidth{\labelwidth}{{\rm(iv)}}\leftmargin=\labelwidth%
\advance\leftmargin by \labelsep\rightmargin=0pt\usecounter{cnt}}}{\end{list}} \makeatletter
\def\mydggeometry{\makeatletter\dg@YGRID=1\dg@XGRID=20\unitlength=0.003pt\makeatother}
\makeatother \theoremstyle{remark}
\numberwithin{equation}{section}
\let\bwdg\bigwedge
\def\bigwedge{{\textstyle\bwdg}}
\begin{document}

\newcommand{\thmref}[1]{Theorem~\ref{#1}}
\newcommand{\secref}[1]{Section~\ref{#1}}
\newcommand{\lemref}[1]{Lemma~\ref{#1}}
\newcommand{\propref}[1]{Proposition~\ref{#1}}
\newcommand{\corref}[1]{Corollary~\ref{#1}}
\newcommand{\remref}[1]{Remark~\ref{#1}}
\newcommand{\defref}[1]{Definition~\ref{#1}}
\newcommand{\er}[1]{(\ref{#1})}
\newcommand{\id}{\operatorname{id}}
\newcommand{\ord}{\operatorname{\emph{ord}}}
\newcommand{\sgn}{\operatorname{sgn}}
\newcommand{\wt}{\operatorname{wt}}
\newcommand{\tensor}{\otimes}
\newcommand{\from}{\leftarrow}
\newcommand{\nc}{\newcommand}
\newcommand{\rnc}{\renewcommand}
\newcommand{\dist}{\operatorname{dist}}
\newcommand{\qbinom}[2]{\genfrac[]{0pt}0{#1}{#2}}
\nc{\cal}{\mathcal} \nc{\goth}{\mathfrak} \rnc{\bold}{\mathbf}
\renewcommand{\frak}{\mathfrak}
\newcommand{\supp}{\operatorname{supp}}
\newcommand{\Irr}{\operatorname{Irr}}
\renewcommand{\Bbb}{\mathbb}
\nc\bomega{{\mbox{\boldmath $\omega$}}} \nc\bpsi{{\mbox{\boldmath $\Psi$}}}
 \nc\balpha{{\mbox{\boldmath $\alpha$}}}
 \nc\bpi{{\mbox{\boldmath $\pi$}}}
\nc\bsigma{{\mbox{\boldmath $\sigma$}}} \nc\bcN{{\mbox{\boldmath $\cal{N}$}}} \nc\bcm{{\mbox{\boldmath $\cal{M}$}}} \nc\bLambda{{\mbox{\boldmath
$\Lambda$}}}
\def\sq{\operatorname{sq}}
\newcommand{\lie}[1]{\mathfrak{#1}}
\makeatletter
\def\section{\def\@secnumfont{\mdseries}\@startsection{section}{1}%
  \z@{.7\linespacing\@plus\linespacing}{.5\linespacing}%
  {\normalfont\scshape\centering}}
\def\subsection{\def\@secnumfont{\bfseries}\@startsection{subsection}{2}%
  {\parindent}{.5\linespacing\@plus.7\linespacing}{-.5em}%
  {\normalfont\bfseries}}
\makeatother
\def\subl#1{\subsection{}\label{#1}}
 \nc{\Hom}{\operatorname{Hom}}
  \nc{\mode}{\operatorname{mod}}
\nc{\End}{\operatorname{End}} \nc{\wh}[1]{\widehat{#1}} \nc{\Ext}{\operatorname{Ext}} \nc{\ch}{\text{ch}} \nc{\ev}{\operatorname{ev}}
\nc{\Ob}{\operatorname{Ob}} \nc{\soc}{\operatorname{soc}} \nc{\rad}{\operatorname{rad}} \nc{\head}{\operatorname{head}}
\def\Im{\operatorname{Im}}
\def\gr{\operatorname{gr}}
\def\mult{\operatorname{mult}}
\def\Max{\operatorname{Max}}
\def\ann{\operatorname{Ann}}
\def\sym{\operatorname{sym}}
\def\Res{\operatorname{\br^\lambda_A}}
\def\und{\underline}
\def\Lietg{$A_k(\lie{g})(\bsigma,r)$}

 \nc{\Cal}{\cal} \nc{\Xp}[1]{X^+(#1)} \nc{\Xm}[1]{X^-(#1)}
\nc{\on}{\operatorname} \nc{\Z}{{\bold Z}} \nc{\J}{{\cal J}} \nc{\C}{{\bold C}} \nc{\Q}{{\bold Q}}
\renewcommand{\P}{{\cal P}}
\nc{\N}{{\Bbb N}} \nc\boa{\bold a} \nc\bob{\bold b} \nc\boc{\bold c} \nc\bod{\bold d} \nc\boe{\bold e} \nc\bof{\bold f} \nc\bog{\bold g}
\nc\boh{\bold h} \nc\boi{\bold i} \nc\boj{\bold j} \nc\bok{\bold k} \nc\bol{\bold l} \nc\bom{\bold m} \nc\bon{\bold n} \nc\boo{\bold o}
\nc\bop{\bold p} \nc\boq{\bold q} \nc\bor{\bold r} \nc\bos{\bold s} \nc\boT{\bold t} \nc\boF{\bold F} \nc\bou{\bold u} \nc\bov{\bold v}
\nc\bow{\bold w} \nc\boz{\bold z} \nc\boy{\bold y} \nc\ba{\bold A} \nc\bb{\bold B} \nc\bc{\bold C} \nc\bd{\bold D} \nc\be{\bold E} \nc\bg{\bold
G} \nc\bh{\bold H} \nc\bi{\bold I} \nc\bj{\bold J} \nc\bk{\bold K} \nc\bl{\bold L} \nc\bm{\bold M} \nc\bn{\bold N} \nc\bo{\bold O} \nc\bp{\bold
P} \nc\bq{\bold Q} \nc\br{\bold R} \nc\bs{\bold S} \nc\bt{\bold T} \nc\bu{\bold U} \nc\bv{\bold V} \nc\bw{\bold W} \nc\bz{\bold Z} \nc\bx{\bold
x} \nc\KR{\bold{KR}} \nc\rk{\bold{rk}} \nc\het{\text{ht }}

\nc\toa{\tilde a} \nc\tob{\tilde b} \nc\toc{\tilde c} \nc\tod{\tilde d} \nc\toe{\tilde e} \nc\tof{\tilde f} \nc\tog{\tilde g} \nc\toh{\tilde h}
\nc\toi{\tilde i} \nc\toj{\tilde j} \nc\tok{\tilde k} \nc\tol{\tilde l} \nc\tom{\tilde m} \nc\ton{\tilde n} \nc\too{\tilde o} \nc\toq{\tilde q}
\nc\tor{\tilde r} \nc\tos{\tilde s} \nc\toT{\tilde t} \nc\tou{\tilde u} \nc\tov{\tilde v} \nc\tow{\tilde w} \nc\toz{\tilde z}

\nc\parlambda{\lambda=(\lambda_1\ge\lambda_2\ge\cdots\ge\lambda_k>0)}
\title[]{Square partitions and Catalan numbers}
\author{Matthew Bennett, Vyjayanthi Chari, R. J. Dolbin and Nathan Manning}
\thanks{VC was partially supported by the NSF grant DMS-0901253}
\address{Department of Mathematics, University of
California, Riverside, CA 92521.} \email{mbenn002@math.ucr.edu}\email{chari@math.ucr.edu}
 \email{rjdolbin@math.ucr.edu}\email {nmanning@math.ucr.edu}

 \begin{abstract} For each  integer $k\ge 1$, we define an algorithm which associates to a partition  whose maximal value is at most $k$ a certain subset of all partitions. In the case when we begin with a partition $\lambda$ which is square, i.e $\lambda=\lambda_1\ge\cdots\ge\lambda_k>0$, and  $\lambda_1=k,\lambda_k=1$, then applying the algorithm $\ell$ times gives rise to a set whose cardinality is either the Catalan number $c_{\ell-k+1}$ (the self dual case) or twice the Catalan number. The algorithm defines a tree and we study the propagation of the tree, which is not in the isomorphism class of the usual Catalan tree. The algorithm can also be modified to produce a two--parameter family of sets and the resulting cardinalities of the sets are the ballot numbers.  Finally, we give a conjecture on the rank of a particular module for the ring of symmetric functions in $2\ell+m$ variables.
 \end{abstract}

\maketitle

\section*{Introduction} The Catalan numbers $c_\ell$, where $\ell$ is a non--negative integer, appear in a large number combinatorial settings and in \cite{Stanley} one can find sixty--six interpretations of the  Catalan numbers. Many of these  generalize to the ballot numbers $b_{\ell, m}$ where $\ell, m$ are both non--negative integers and $c_\ell=b_{\ell,0}$. These numbers also appear in the representation theory of the Lie algebra $\lie {sl_2}$ in the following way. Consider the $(2\ell+m)$--fold tensor product of the natural representation of $\lie{sl_2}$. As a representation of $\lie{sl_2}$ this tensor product is completely reducible and the multiplicity of the $(m+1)$--dimensional irreducible representation of $\lie{sl_2}$ is $b_{\ell,m}$.

The current  paper was motivated by the study of the category of  finite--dimensional representations of the affine Lie algebra associated to $\lie{sl_2}$ and an attempt begun in \cite{CG1}, \cite{CG2} to develop a theory of highest weight categories after \cite{CPS}. In the course of their work, Chari and Greenstein realized that one of the  results required for this would be  to prove that a  certain naturally defined  module for the ring of symmetric functions in $2\ell$ variables is free of rank equal to the Catalan number. In fact, it has  turned out that finer results are needed, namely one would need the basis of the free module and also an extension to more general modules for the ring of symmetric functions.
The conjecture is made precise in Section 4 of this paper.

In section one of this paper, we define an algorithm   which when applied $\ell$ times to a  partition $\lambda=k\ge\lambda_2\cdots\ge\lambda_{k-1}>1>0$ gives a subset of partitions with cardinality equal to the Catalan number $c_{\ell-k+1}$.  In fact we prove that  this algorithm defines an equivalence relation on the set $\cal P^\ell$  of all partitions $\mu=\mu_1\ge\cdots\ge \mu_\ell>0$ which satisfy $\mu_1\le \ell$. The algorithm defines an ordered rooted tree which is labeled either by pairs of non--negative integers or by single non--negative integers, and thus is very different from the usual Catalan tree. In addition, our algorithm uses a certain involution $\tau_\ell$ which defines a duality on $\cal P^\ell$. Our proofs in Section 2 are algebraic rather than combinatorial. A reason for this is that we were unable to find  any natural bijection between the sets we  describe and the usual sets giving rise to the Catalan numbers which keeps track of the  duality.

In Section 3 we describe a generalization of the algorithm which gives rise to ballot numbers $b_{\ell,m}$. This time, the algorithm describes a set of $m$ rooted ordered  trees. To do this, we  prove an alternating identity for the ballot numbers, which generalizes the well--known one \cite{Aigner} for Catalan numbers. In section 4, we also discuss further directions in which these algorithms could be generalized.

\vskip 24pt

{\em Acknowledgements:  We are grateful to Jacob Greenstein for discussions and his help with a Mathematica program which was crucial in the early stages of this work.}

\section{The Main Results}

\subsection{} Throughout the paper $\bn$ denotes the set of natural numbers and $\bz_+$ the set of non--negative integers. By a partition $\lambda$ with $n$ parts, we  mean a decreasing sequence  $$\lambda=(\lambda_1\ge\lambda_2\ge\cdots\ge\lambda_n>0)$$ of positive integers. We denote the set of all partitions by $\cal P$. Given $\lambda\in \cal P$ set  $$\lambda\setminus\{\lambda_n\}= \lambda_1\ge\cdots\ge\lambda_{n-1},$$
and for $0 < \lambda_{n+1} \leq \lambda_n$ set $$(\lambda:\lambda_{n+1})=(\lambda_1\ge\lambda_2\ge\cdots\ge\lambda_n\ge\lambda_{n+1}>0).$$
For $\ell,m\ge 0$, let $b_{\ell,m}$ be the ballot numbers given by $$b_{\ell,m}=\binom{2\ell+m}{\ell}-\binom{2\ell+m}{\ell-1},$$ and set $c_\ell=b_{\ell,0}.$

 \subsection{} For $k, n \in\bn$, let $\cal P^{n,k}$ be the set of partitions with exactly  $n$ parts where no part is bigger than $k$, i.e.
$$\cal P^{n,k}=\{\lambda=(\lambda_1\ge\lambda_2\ge\cdots\ge\lambda_n>0):\lambda_1\le k\}.$$
 Let $\tau_k:\cal P^{n,k}\to\cal P^{n,k}$ be defined by $$\tau_k(\lambda_1\ge\cdots\ge \lambda_n)=k+1-\lambda_n\ge\cdots\ge k+1-\lambda_1.$$ Clearly $\tau_k$ is a bijection of order two.
To understand the map $\tau_k$ in terms of Young diagrams, it is convenient to think of $\cal P^{n,k}$ as  the set of partitions whose Young diagrams lie in an $n\times k$ rectangle and with exactly $n$ rows. The Young diagram of $\tau_k(\lambda)$ is obtained by taking the skew diagram $(n\times (k+1))\setminus\lambda$  and rotating it by one hundred eighty degrees.

   As an example, we can regard the partition $\lambda = (7\ge 6\ge 5\ge 5\ge 4\ge 2)$ as an element of  $\cal P^{6,9}$ in which case we have  $\tau_9(\lambda) =( 8\ge 6\ge 5\ge 5\ge 4\ge  3)$,  and pictorially, we get

\setlength{\unitlength}{5mm}
\begin{picture}(10,10)
 \linethickness{.5mm}

 \put(1,8){\line(1,0){10}}
 \put(1,2){\line(1,0){10}}
 \put(1,8){\line(0,-1){6}}
 \put(11,8){\line(0,-1){6}}
 \put(3,2){\line(0,1){1}}
 \put(3,3){\line(1,0){2}}
 \put(5,3){\line(0,1){1}}
 \put(5,4){\line(1,0){1}}
 \put(6,4){\line(0,1){2}}
 \put(6,6){\line(1,0){1}}
 \put(7,6){\line(0,1){1}}
 \put(7,7){\line(1,0){1}}
 \put(8,7){\line(0,1){1}}

 \linethickness{.1mm}

 \multiput(1,3)(0,1){5}{\line(1,0){10}}
 \multiput(2,2)(1,0){9}{\line(0,1){6}}

 \put(3,8.5){$\lambda$}
 \put(4,1){$\mu$}

 \put(12,5){\vector(1,0){3}}
 \put(12,5.2){rotate $\mu$ }

 \put(16,8){\line(0,-1){6}}
 \put(17,8){\line(0,-1){6}}
 \put(18,8){\line(0,-1){6}}
 \put(19,8){\line(0,-1){6}}
 \put(20,8){\line(0,-1){5}}
 \put(21,8){\line(0,-1){4}}
 \put(22,8){\line(0,-1){2}}
 \put(23,8){\line(0,-1){1}}
 \put(24,8){\line(0,-1){1}}

 \put(16,2){\line(1,0){3}}
 \put(16,3){\line(1,0){4}}
 \put(16,4){\line(1,0){5}}
 \put(16,5){\line(1,0){5}}
 \put(16,6){\line(1,0){6}}
 \put(16,7){\line(1,0){8}}
 \put(16,8){\line(1,0){8}}

 \put(22,4){$\tau_9 ( \lambda )$}

\end{picture}

\subsection{}Set $\cal P^k=\cal P^{k,k}$.  Given $\lambda\in\cal P^k$ and $\ell,k\in\bn$ with $\ell\ge k$, define subsets $ \cal P^\ell(\lambda)$ of $\cal P^\ell$ inductively, by
\begin{gather*}\cal P^k(\lambda)=\{\lambda\}\cup\{\tau_k\lambda\},\ \ \   \cal P^\ell(\lambda)=\cal P^\ell_{\rm d}(\lambda)\cup\cal P^\ell_\tau(\lambda),\end{gather*} where
\begin{gather*}\cal P_{\rm d}^\ell(\lambda)=\{\mu\in\cal P^\ell:   \mu\setminus\{\mu_\ell\}\in\cal P^{\ell-1}(\lambda)\},\\ \\ \cal P^\ell_\tau(\lambda)= \{\mu\in\cal P^\ell:  \tau_\ell\mu\setminus\{\ell+1-\mu_1\}\in \  \cal P^{\ell-1}(\lambda)\}=\tau_\ell\cal P^\ell_{\rm d}(\lambda).\end{gather*}
 Clearly \begin{equation*}\label{taustab}\cal P^\ell(\tau_k\lambda)= \tau_\ell\cal P^\ell(\lambda)=\cal P^\ell(\lambda).\end{equation*} \begin{lem} \label{min} Let $\mu\in\cal P^\ell$. Then
\begin{gather*} \mu\in\cal P^\ell_{\rm d}(\lambda)\implies\mu_1\le \ell-1,\\ \mu\in\cal P^\ell_\tau(\lambda)\implies \mu_\ell>1.\end{gather*}\end{lem}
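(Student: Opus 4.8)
The plan is simply to unwind the definitions; both implications follow at once from the single observation that $\cal P^{\ell-1}(\lambda)\subseteq\cal P^{\ell-1}=\cal P^{\ell-1,\ell-1}$, so that every partition belonging to $\cal P^{\ell-1}(\lambda)$ has all of its parts bounded by $\ell-1$. One should keep in mind here that the decomposition $\cal P^\ell(\lambda)=\cal P^\ell_{\rm d}(\lambda)\cup\cal P^\ell_\tau(\lambda)$ is only in force when $\ell>k$, so that $\ell-1\ge k$ and $\cal P^{\ell-1}(\lambda)$ is defined; in particular $\ell\ge k+1\ge 2$.

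For the first implication, suppose $\mu\in\cal P^\ell_{\rm d}(\lambda)$, so that $\mu\setminus\{\mu_\ell\}\in\cal P^{\ell-1}(\lambda)$. Since $\ell\ge 2$, the partition $\mu\setminus\{\mu_\ell\}=(\mu_1\ge\cdots\ge\mu_{\ell-1})$ is nonempty and its largest part is $\mu_1$; by the observation above this forces $\mu_1\le\ell-1$, as claimed.

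For the second implication I would invoke the identity $\cal P^\ell_\tau(\lambda)=\tau_\ell\cal P^\ell_{\rm d}(\lambda)$ recorded just before the lemma. If $\mu\in\cal P^\ell_\tau(\lambda)$ then $\tau_\ell\mu\in\cal P^\ell_{\rm d}(\lambda)$, and the first implication applied to $\tau_\ell\mu$ gives $(\tau_\ell\mu)_1\le\ell-1$. By the definition of $\tau_\ell$ one has $(\tau_\ell\mu)_1=\ell+1-\mu_\ell$, whence $\ell+1-\mu_\ell\le\ell-1$, i.e.\ $\mu_\ell\ge 2$, which is exactly $\mu_\ell>1$. Equivalently, one may argue directly: $\tau_\ell\mu\setminus\{\ell+1-\mu_1\}=(\ell+1-\mu_\ell\ge\cdots\ge\ell+1-\mu_2)$ lies in $\cal P^{\ell-1,\ell-1}$, and reading off the bound on its largest part $\ell+1-\mu_\ell$ again yields $\mu_\ell\ge 2$.

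I do not anticipate any genuine obstacle here: the entire content is that $\cal P^{\ell-1}(\lambda)$ consists of partitions whose Young diagrams fit in an $(\ell-1)\times(\ell-1)$ box, together with the elementary facts that deleting the smallest part of $\mu\in\cal P^\ell$ leaves a partition with $\ell-1$ parts and first part still $\mu_1$, and that the first part of $\tau_\ell\mu$ is $\ell+1-\mu_\ell$. The only points meriting a word of care are the range of validity $\ell>k$ (so that the sets in question are actually defined) and keeping track of which part of $\tau_\ell\mu$ is being removed in the definition of $\cal P^\ell_\tau(\lambda)$.
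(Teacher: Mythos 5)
Your proof is correct and follows essentially the same route as the paper: the first implication is read off from $\mu\setminus\{\mu_\ell\}\in\cal P^{\ell-1}(\lambda)\subseteq\cal P^{\ell-1}$, and the second is obtained by writing $\mu=\tau_\ell\nu$ with $\nu\in\cal P^\ell_{\rm d}(\lambda)$ and computing $\mu_\ell=\ell+1-\nu_1\ge 2$. Your added remarks about the range $\ell>k$ and which part of $\tau_\ell\mu$ is deleted are harmless elaborations of the same argument.
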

\begin{pf} The first statement is clear from the definition of $\cal P^\ell_{\rm d}(\lambda)$ while for the second  note that if $\mu=\tau_\ell\nu$ for some $\nu\in\cal P^\ell_{\rm d}(\lambda)$, then   $\mu_\ell=\ell+1-\nu_1\ge 2.$ \end{pf}
\subsection{} We illustrate the recursive definition of $\cal P^\ell(\lambda)$ in a simple case using Young diagrams. Consider the case when $\lambda=(1\ge 1)\in\cal P^2$.
Then, the elements of $\cal P^2(\lambda)$ and  $\cal P^3(\lambda)$ are obtained as follows:

\vskip 24pt

\setlength{\unitlength}{5mm}
\begin{picture}(10,10)

 \multiput(5,8)(0,1){3}{\line(1,0){1}}
 \multiput(5,8)(1,0){2}{\line(0,1){2}}
 \put(4.5,7.5){\vector(-1,-1){3}}
 \put(2.7,6.5){$1$}
 \put(7,9){\vector(1,0){8}}
 \put(15,9){\vector(-1,0){8}}
 \put(11,9.2){$\tau$}

 \multiput(16,8)(0,1){3}{\line(1,0){2}}
 \multiput(16,8)(1,0){3}{\line(0,1){2}}
 \put(15.5,7.5){\vector(-1,-1){3}}
 \put(13.7,6.5){$1$}
 \put(18.5,7.5){\vector(1,-1){3}}
 \put(19.8,6.5){$2$}

 \multiput(1,1)(0,1){4}{\line(1,0){1}}
 \multiput(1,1)(1,0){2}{\line(0,1){3}}
 \put(2.5,2.5){\vector(1,0){2}}
 \put(4.5,2.5){\vector(-1,0){2}}
 \put(3.2,2.7){$\tau$}

 \multiput(5,1)(0,1){4}{\line(1,0){3}}
 \multiput(5,1)(1,0){4}{\line(0,1){3}}

 \multiput(11,2)(0,1){3}{\line(1,0){2}}
 \multiput(11,2)(1,0){3}{\line(0,1){2}}
 \multiput(11,2)(1,0){2}{\line(0,-1){1}}
 \put(11,1){\line(1,0){1}}
 \put(13.5,2.5){\vector(1,0){2}}
 \put(15.5,2.5){\vector(-1,0){2}}
 \put(14.2,2.7){$\tau$}

 \multiput(16,1)(1,0){3}{\line(0,1){3}}
 \multiput(16,1)(0,1){2}{\line(1,0){2}}
 \multiput(16,3)(0,1){2}{\line(1,0){3}}
 \put(19,3){\line(0,1){1}}

 \multiput(21,1)(1,0){3}{\line(0,1){3}}
 \multiput(21,1)(0,1){4}{\line(1,0){2}}

 \put(23.5,4.5){\oval(2,2)[r]}
 \put(23.5,4.5){\oval(2,2)[t]}
 \put(22.5,4.5){\vector(0,-1){.01}}
 \put(24.7,4,5){$\tau$}

\end{picture}

\subsection{} For $k\in\bz_+$, set \begin{gather*} \cal P^k_{\sq}=\{\lambda\in\cal P^k:\lambda_1=k,\ \ \lambda_k=1\}.\end{gather*}
The following is the main result of this section.

\begin{thm} \begin{enumerit}
\item[(i)] Let $\ell, k\in\bn$ be such that $\ell\ge k$ and let $\lambda\in\cal P^k_{\sq}$. Then, $$\#\cal P^\ell(\lambda)=\begin{cases} c_{\ell-k+1},\ \ \ \lambda=\tau_k\lambda,\\ \ \\  2c_{\ell-k+1}, \ \ \lambda\ne \tau_k\lambda.\end{cases}$$
    \item[(ii)] Let $\lambda\in\cal P^k_{\sq}$, $\nu\in\cal P^s_{\sq}$. For all $\ell\in\bz_+$ with $\ell\ge\max(k,s)$, we have $$\cal P^\ell(\lambda)\cap\P^\ell(\nu)=\emptyset,\ \ {\rm{if}}\ \ \nu\notin\{\lambda,\tau_k(\lambda)\}.$$
        \item[(iii)] We have $$\cal P^\ell =\bigsqcup_{\{\lambda\in\cal P^k_{\sq}: \ \ell\ \ge \ k\ge\  1\}}\cal P^\ell(\lambda).$$

\end{enumerit}
\end{thm}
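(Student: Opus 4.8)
The plan is to treat the three parts as progressively stronger statements that all flow from a careful analysis of the recursion defining $\cal P^\ell(\lambda)$, together with the involution $\tau_\ell$. First I would set up the key structural observation hidden in \lemref{min}: for $\mu \in \cal P^\ell$, membership in $\cal P^\ell_{\rm d}(\lambda)$ is governed by the ``shrunk'' partition $\mu\setminus\{\mu_\ell\}$ lying in $\cal P^{\ell-1}(\lambda)$, and this is only possible when $\mu_1 \le \ell-1$; dually for $\cal P^\ell_\tau(\lambda)$ one needs $\mu_\ell > 1$. The crucial point to extract is the size of the overlap $\cal P^\ell_{\rm d}(\lambda)\cap\cal P^\ell_\tau(\lambda)$: a $\mu$ in the intersection must satisfy both $\mu_1 \le \ell-1$ and $\mu_\ell \ge 2$, and then both $\mu\setminus\{\mu_\ell\}$ and $\tau_\ell\mu\setminus\{\ell+1-\mu_1\}$ lie in $\cal P^{\ell-1}(\lambda)$. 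I would prove that this intersection is controlled by a recursion one dimension down, so that $\#\cal P^\ell(\lambda) = \#\cal P^\ell_{\rm d}(\lambda) + \#\cal P^\ell_\tau(\lambda) - \#(\cal P^\ell_{\rm d}(\lambda)\cap\cal P^\ell_\tau(\lambda))$, and since $\tau_\ell$ is a bijection carrying $\cal P^\ell_{\rm d}$ onto $\cal P^\ell_\tau$, we get $\#\cal P^\ell_{\rm d}(\lambda) = \#\cal P^{\ell-1}(\lambda)$ minus the number of $\nu \in \cal P^{\ell-1}(\lambda)$ with $\nu$ having no admissible extension (i.e., with $\nu_1 = \ell-1$, forbidding $(\nu:\ell-1)\dots$ — one needs $\mu_\ell \le \mu_{\ell-1}$, which is automatic, so actually $\#\cal P^\ell_{\rm d}(\lambda)$ counts pairs $(\nu, j)$ with $\nu\in\cal P^{\ell-1}(\lambda)$ and $1\le j \le \nu_{\ell-1}$). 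This bookkeeping is where the Catalan recursion must materialize.

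Second, for part (i), I would run an induction on $\ell \ge k$ with base case $\ell = k$: since $\lambda \in \cal P^k_{\sq}$ has $\lambda_1 = k$ and $\lambda_k = 1$, \lemref{min} forces $\lambda \notin \cal P^k_{\rm d}$ and $\lambda \notin \cal P^k_\tau$ at the next level in a controlled way, and $\#\cal P^k(\lambda) = \#\{\lambda, \tau_k\lambda\}$ is $1$ or $2$ according to whether $\lambda = \tau_k\lambda$, matching $c_1 = 1$. For the inductive step I would show that the counting identity above becomes exactly $c_{\ell-k+1} = \sum_{i} c_i c_{\ell-k-i}$ (the standard convolution), by grouping elements of $\cal P^\ell(\lambda)$ according to the largest index $p$ at which $\mu_p = \ell$ (equivalently, the ``corner'' of the Young diagram), splitting $\mu$ into a piece living in an $(\ell - p)\times(\ell-1)$-type box and a complementary square piece; the self-dual vs non-self-dual dichotomy is preserved because $\tau_\ell$ respects this decomposition and the self-dual $\lambda$'s generate self-dual $\cal P^\ell(\lambda)$, halving the count. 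I expect the honest identification of which subpartitions of $\mu$ recurse — and verifying the generating-function/convolution identity rather than just asserting it — to be the main obstacle; the paper presumably isolates this as a separate combinatorial lemma, and I would do the same, proving $\sum_{\ell} c_\ell x^\ell$ satisfies $C = 1 + xC^2$ and matching it against the recursion for $\#\cal P^\ell(\lambda)$.

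Third, parts (ii) and (iii) I would handle together once (i) is in hand. For (ii), disjointness: if $\mu \in \cal P^\ell(\lambda) \cap \cal P^\ell(\nu)$ with $\lambda \in \cal P^k_{\sq}$, $\nu \in \cal P^s_{\sq}$, I would descend the recursion — at each stage $\mu$ is either a ``$\rm d$-extension'' of something in $\cal P^{\ell-1}(\lambda)$ or a ``$\tau$-extension,'' and by applying $\tau_\ell$ if necessary one reduces to the $\rm d$ case, stripping off $\mu_\ell$; iterating, one arrives at the base level where $\mu$ reduces to $\lambda$ or $\tau_k\lambda$ (resp. $\nu$ or $\tau_s\nu$), and I must show the reduction is \emph{well-defined}, i.e., that the sequence of choices (whether to apply $\tau$) and the resulting base partition are intrinsic to $\mu$, not to the starting $\lambda$. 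The key lemma here is that the "square reduction" of $\mu$ — repeatedly: if $\mu_1 = (\text{length})$ and $\mu_{\text{last}} = 1$ stop, else strip the appropriate corner after possibly dualizing — produces a canonical element of $\bigcup_k \cal P^k_{\sq}$ up to $\tau_k$. Then $\lambda$ and $\nu$ have the same square reduction, forcing $\nu \in \{\lambda, \tau_k\lambda\}$ (in particular $s = k$). For (iii), the union being all of $\cal P^\ell$: every $\mu \in \cal P^\ell$ has \emph{some} square reduction $\lambda \in \cal P^k_{\sq}$ with $k \le \ell$ (the reduction terminates because the number of parts strictly decreases), hence $\mu \in \cal P^\ell(\lambda)$; disjointness from (ii) makes the union a coproduct. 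I expect the well-definedness of the square-reduction map — particularly checking that the stripping operation commutes appropriately with $\tau_\ell$ and that one never gets "stuck" — to require the most care, and this is precisely why the authors remark that their proofs are algebraic rather than combinatorial.
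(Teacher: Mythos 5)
Your structural setup (the constraints of the Lemma, the identification of $\cal P^\ell_{\rm d}(\lambda)\cap\cal P^\ell_\tau(\lambda)$, and the resulting disjoint decomposition $\cal P^\ell(\lambda)=\cal P^\ell_{\rm d}(\lambda)\sqcup\{\mu\in\cal P^\ell_\tau(\lambda):\mu_1=\ell\}$) matches the paper, and your sketches of (ii) and (iii) --- induction on $\ell$ with a case split on $\mu_1=\ell$, $\mu_\ell=1$, or neither, plus a transitivity lemma and a ``square reduction'' showing every $\mu$ descends to some $\lambda\in\cal P^k_{\sq}$ --- are essentially the paper's arguments. The genuine gap is in part (i). You correctly observe that $\#\cal P^\ell_{\rm d}(\lambda)$ counts pairs $(\nu,j)$ with $\nu\in\cal P^{\ell-1}(\lambda)$ and $1\le j\le\nu_{\ell-1}$, i.e.\ it equals $\sum_\nu \nu_{\ell-1}$; this means the one-parameter count $\#\cal P^\ell(\lambda)$ does \emph{not} satisfy a closed recursion by itself --- you need the distribution of smallest parts at level $\ell-1$. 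Instead of pursuing that, you pivot to trying to realize the Catalan convolution $c_{n+1}=\sum_i c_ic_{n-i}$ by decomposing $\mu$ at ``the largest index $p$ with $\mu_p=\ell$'' into two independent pieces. You never establish that this decomposition is a bijection onto a product of two smaller sets of the same kind, and you flag it yourself as the main obstacle. This is exactly the step the authors say they could not carry out: they state explicitly that they were unable to find a natural bijection with the usual Catalan structures compatible with the duality $\tau_\ell$, and the tree $\bt$ they construct is not isomorphic to the standard Catalan tree, which is strong evidence that no such first-return-style factorization of $\cal P^\ell(\lambda)$ is available.

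The paper closes the recursion differently: it introduces the two-parameter family $e_{\ell,r}(\lambda)=\#\{\mu\in\cal P^\ell(\lambda):\mu_\ell\ge r\}$, uses the disjoint decomposition together with the $\tau_\ell$-symmetry $e_{\ell,r}(\lambda)=\#\{\mu:\mu_1\le\ell+1-r\}$ to derive $e_{\ell,r}=e_{\ell-1,r-1}+e_{\ell,r+1}$, pins down the boundary conditions $e_{\ell,r}=0$ for $r>\ell-k+1$ and $e_{\ell,\ell-k+1}=\#\cal P^k(\lambda)$, and then verifies that the ballot numbers $b_{\ell-k-r+1,r}$ (or twice them) solve this system; setting $r=1$ gives the Catalan count. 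Your first paragraph contains all the raw material for this (the pairs $(\nu,j)$ with $j\le\nu_{\ell-1}$ are precisely the refinement by smallest part), so the fix is to refine your count by the statistic $\mu_\ell\ge r$ and solve the resulting two-parameter recurrence rather than hunting for a convolution identity that the structure of $\cal P^\ell(\lambda)$ does not naturally supply.
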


\begin{rem} The first part of the  theorem in particular proves that applying the algorithm $m$ times to any element of $\cal P^k_{\sq}$ for any $k\ge 1$ produces a set whose cardinality depends only on $m$ and the size of the orbit of the initial element. The remaining parts prove that the algorithm gives a partition of the set $\cup_{\ell\ge 1}\cal P^\ell$. \end{rem}

\subsection{} We note the following corollary of the theorem.
\begin{cor} For $\ell\ge 1$, we have $$\ell \cdot c_{\ell+1}= \sum_{i=1}^\ell(\ell-i+2)c_ic_{\ell-i+1}.$$
\end{cor}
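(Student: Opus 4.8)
The plan is to count $\cal P^\ell$ in two ways — once via the Theorem, once by an elementary Young diagram argument — and then extract the identity by arithmetic.

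First I would record what the Theorem says about $\#\cal P^\ell$. Choosing one representative from each $\tau_k$-orbit in $\cal P^k_{\sq}$ for every $k$ with $1\le k\le\ell$, parts (ii) and (iii), together with the identity $\cal P^\ell(\tau_k\lambda)=\cal P^\ell(\lambda)$, say that the corresponding sets $\cal P^\ell(\lambda)$ form a partition of $\cal P^\ell$. By part (i) a self-dual representative contributes $c_{\ell-k+1}$ and a non-self-dual one contributes $2c_{\ell-k+1}$, i.e.\ in both cases (orbit size)$\,\cdot c_{\ell-k+1}$; summing orbit sizes over $\cal P^k_{\sq}$ gives
\begin{equation*}
\#\cal P^\ell=\sum_{k=1}^{\ell}\#\cal P^k_{\sq}\;c_{\ell-k+1}.\tag{$\star$}
\end{equation*}

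Next I would evaluate the two cardinalities in $(\star)$. A member of $\cal P^\ell$ is a weakly decreasing sequence of length $\ell$ with entries in $\{1,\dots,\ell\}$, so $\#\cal P^\ell=\binom{2\ell-1}{\ell}=\tfrac{\ell+1}{2}c_\ell$, using $c_n=\tfrac1{n+1}\binom{2n}{n}$. A member of $\cal P^k_{\sq}$ has $\lambda_1=k$ and $\lambda_k=1$ fixed and is otherwise an arbitrary weakly decreasing sequence $\lambda_2\ge\dots\ge\lambda_{k-1}$ of length $k-2$ with entries in $\{1,\dots,k\}$, so $\#\cal P^1_{\sq}=1$ and $\#\cal P^k_{\sq}=\binom{2k-3}{k-2}=\tfrac k2 c_{k-1}$ for $k\ge2$. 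Substituting these into $(\star)$, isolating the $k=1$ term $c_\ell$, multiplying by $2$, and putting $j=k-1$ gives
\begin{equation*}
(\ell-1)\,c_\ell=\sum_{j=1}^{\ell-1}(j+1)\,c_j\,c_{\ell-j}\qquad(\ell\ge1).
\end{equation*}
Replacing $\ell$ by $\ell+1$ and substituting $i=\ell+1-j$ in the sum (so $j+1=\ell-i+2$, $c_j=c_{\ell-i+1}$, $c_{\ell+1-j}=c_i$) turns this into $\ell\,c_{\ell+1}=\sum_{i=1}^{\ell}(\ell-i+2)\,c_i\,c_{\ell-i+1}$, as desired.

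All the mathematical content sits in the Theorem; the only points that need a little care are the bookkeeping in the first paragraph (matching the orbit-size factors to the two cases of part (i) when passing from the disjoint union of part (iii) to the sum $(\star)$) and the exceptional $k=1$ term in $(\star)$. After that it is the two elementary counts and a re-indexing, which I do not expect to cause trouble.
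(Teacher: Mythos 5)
Your proof is correct and follows essentially the same route as the paper: both count $\#\cal P^\ell=\binom{2\ell-1}{\ell}$ directly and again via the Theorem's decomposition into the sets $\cal P^\ell(\lambda)$, $\lambda\in\cal P^k_{\sq}$, yielding the same identity $(\star)$, and then extract the Catalan recurrence by algebra. The only (harmless) difference is in the endgame: you convert the binomial coefficients to Catalan numbers at once via $\binom{2n-1}{n}=\tfrac{n+1}{2}c_n$ and $\binom{2k-3}{k-2}=\tfrac{k}{2}c_{k-1}$, whereas the paper reaches the same conclusion through a chain of binomial manipulations it largely leaves to the reader.
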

\begin{pf} By the theorem, we know that $$\#\cal P^\ell=\sum_{\lambda\in\cal P^k_{\rm sq}, \ k\le \ell}c_{\ell-k+1}\#\cal P^k_{\rm sq}(\lambda).$$ The set $\cal P^\ell$ can be identified with the set of paths on the plane from $(1,0)$ to $(\ell,\ell)$ which only go right (or up) along the $x$-axis ($y$--axes) and there are exactly $\binom{2\ell-1}{\ell}$ of them. If $k=1$ then $\cal P^k_{\rm sq}$ has exactly one element. If $k>1$, then again, we can identify elements of  $\cal P^k_{\rm sq}$ with paths from $(1,1)$ to $(k,k-1)$ with only rights and ups and there are $ \binom{2k-3}{k-1}$ of them. Thus we have proven combinatorially that $$c_\ell + \sum\limits_{k=1}^{\ell-1} \binom{2\ell -2k-1}{\ell-k}c_k = \binom{2\ell-1}{\ell},$$ and if  we subtract $c_\ell = \binom{2\ell}{\ell} - \binom{2\ell}{\ell-1}$ from both sides, the right hand side simplifies and we get $\sum\limits_{k=1}^{\ell-1} \binom{2\ell -2k-1}{\ell-k}c_k = \binom{2\ell-1}{\ell-2}$and reindexing, we find
$$\sum\limits_{k=1}^{\ell} \binom{2\ell -2k+1}{\ell-k}c_k = \binom{2\ell+1}{\ell-1},$$ and after some further simple algebraic manipulation we get the corollary.

\end{pf}

\subsection{} We shall prove the theorem in Section 2.   For the rest of this section we show that our algorithm defines a tree if $\lambda=\tau_k\lambda$ and a forest with two trees if $\lambda\ne\tau_k\lambda$ and we study the propagation of the tree and forest respectively.  The first step in this is to observe that the sets $\cal P^\ell_d(\lambda)$ and $\cal P^\ell_\tau(\lambda)$ need  not be disjoint and to  identify the intersection of the two sets.
 \begin{prop}  Let $\ell\ge k\ge 1$, $\lambda\in\cal P^k_{\sq}$.
 We have \begin{eqnarray*}\cal P^\ell_{\rm d}(\lambda)\cap\cal P^\ell_\tau(\lambda)&=\{\mu\in\cal P_{\rm d}^\ell(\lambda): \ell-1\ge\mu_1\ge\mu_\ell\ge 2\}\\ &=\{\mu\in\cal P_{\tau}^\ell(\lambda): \ell-1\ge\mu_1\ge\mu_\ell\ge 2\}\\ & =\{\mu\in\cal P^\ell(\lambda): \ell-1\ge\mu_1\ge\mu_\ell\ge 2\}.\end{eqnarray*}

 \end{prop}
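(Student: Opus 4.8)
The plan is to recast membership in the two sets $\cal P^\ell_{\rm d}(\lambda)$ and $\cal P^\ell_\tau(\lambda)$ so that both become statements about the single set $\cal P^{\ell-1}(\lambda)$, and then to prove that the four sets in the statement coincide by induction on $\ell$. For $\mu=(\mu_1\ge\cdots\ge\mu_\ell)\in\cal P^\ell$ write $\mu^\flat=\mu\setminus\{\mu_\ell\}=(\mu_1\ge\cdots\ge\mu_{\ell-1})$ and, when $\mu_\ell\ge 2$, $\mu^\sharp=(\mu_2-1\ge\cdots\ge\mu_\ell-1)$; then $\mu^\flat\in\cal P^{\ell-1}$ iff $\mu_1\le\ell-1$, and $\mu^\sharp\in\cal P^{\ell-1}$ iff $\mu_\ell\ge 2$. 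By definition $\mu\in\cal P^\ell_{\rm d}(\lambda)$ iff $\mu_1\le\ell-1$ and $\mu^\flat\in\cal P^{\ell-1}(\lambda)$; and when $\mu_\ell\ge 2$ a short computation gives $\tau_\ell\mu\setminus\{\ell+1-\mu_1\}=\tau_{\ell-1}(\mu^\sharp)$ (while if $\mu_\ell=1$ this partition has a part equal to $\ell$ and so is not in $\cal P^{\ell-1}$), so using that $\cal P^{\ell-1}(\lambda)$ is $\tau_{\ell-1}$-stable we get $\mu\in\cal P^\ell_\tau(\lambda)$ iff $\mu_\ell\ge 2$ and $\mu^\sharp\in\cal P^{\ell-1}(\lambda)$. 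Since $\mu_1\ge\mu_\ell$ always, the conditions ``$\ell-1\ge\mu_1\ge\mu_\ell\ge 2$'' in the statement are just ``$\mu_1\le\ell-1$ and $\mu_\ell\ge 2$'', and these are invariant under $\tau_\ell$; recall also $\cal P^\ell_\tau(\lambda)=\tau_\ell\cal P^\ell_{\rm d}(\lambda)$ and $\tau_\ell\cal P^\ell(\lambda)=\cal P^\ell(\lambda)$.

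Next I reduce to a single inclusion. If $\mu\in\cal P^\ell_{\rm d}(\lambda)\cap\cal P^\ell_\tau(\lambda)$ then by the Lemma above $\mu_1\le\ell-1$ and $\mu_\ell\ge 2$, so $\cal P^\ell_{\rm d}(\lambda)\cap\cal P^\ell_\tau(\lambda)$ is contained in each of the three sets on the right of the Proposition. As $\cal P^\ell_{\rm d}(\lambda),\cal P^\ell_\tau(\lambda)\subseteq\cal P^\ell(\lambda)$, the three right-hand sets are nested inside $\{\mu\in\cal P^\ell(\lambda):\mu_1\le\ell-1,\ \mu_\ell\ge 2\}$; hence it suffices to prove the reverse inclusion $\{\mu\in\cal P^\ell(\lambda):\mu_1\le\ell-1,\ \mu_\ell\ge 2\}\subseteq\cal P^\ell_{\rm d}(\lambda)\cap\cal P^\ell_\tau(\lambda)$, after which a squeezing argument forces all four sets to be equal.

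I prove this inclusion by induction on $\ell$ (the case $\ell=k$ does not arise, $\cal P^k_{\rm d}(\lambda)$ and $\cal P^k_\tau(\lambda)$ being undefined). In the base case $\ell=k+1$ one has $\cal P^k(\lambda)=\{\lambda,\tau_k\lambda\}$, and because $\lambda\in\cal P^k_{\sq}$ both $\lambda$ and $\tau_k\lambda$ have first part $k$ and last part $1$; hence a $\cal P^{k+1}_{\rm d}(\lambda)$-element $\mu$ has $\mu_k=1$, forcing $\mu_{k+1}=1$, and a $\cal P^{k+1}_\tau(\lambda)$-element $\mu$ has $\mu_2-1=k$, forcing $\mu_1\ge k+1$; so $\{\mu\in\cal P^{k+1}(\lambda):\mu_1\le k,\ \mu_{k+1}\ge 2\}=\emptyset$ and, by the previous paragraph, all four sets are empty. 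For the inductive step let $\ell\ge k+2$, assume the Proposition at level $\ell-1$, and take $\mu\in\cal P^\ell(\lambda)$ with $\mu_1\le\ell-1$, $\mu_\ell\ge 2$. Since $\tau_\ell$ interchanges $\cal P^\ell_{\rm d}(\lambda)$ and $\cal P^\ell_\tau(\lambda)$, preserves $\cal P^\ell(\lambda)$ and the two inequalities, and commutes with intersection, we may assume $\mu\in\cal P^\ell_{\rm d}(\lambda)$; then $\nu:=\mu^\flat\in\cal P^{\ell-1}(\lambda)$, with $\nu_{\ell-1}=\mu_{\ell-1}\ge\mu_\ell\ge 2$, and it remains to show $\mu^\sharp\in\cal P^{\ell-1}(\lambda)$, i.e.\ $\mu\in\cal P^\ell_\tau(\lambda)$. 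First I claim $\nu\in\cal P^{\ell-1}_\tau(\lambda)$: if $\nu_1=\ell-1$, then $\nu\notin\cal P^{\ell-1}_{\rm d}(\lambda)$ by the Lemma, so $\nu\in\cal P^{\ell-1}_\tau(\lambda)$ because $\cal P^{\ell-1}(\lambda)=\cal P^{\ell-1}_{\rm d}(\lambda)\cup\cal P^{\ell-1}_\tau(\lambda)$; if $\nu_1\le\ell-2$, then $\nu$ lies in $\{\rho\in\cal P^{\ell-1}(\lambda):\rho_1\le\ell-2,\ \rho_{\ell-1}\ge 2\}$, which by the inductive hypothesis equals $\cal P^{\ell-1}_{\rm d}(\lambda)\cap\cal P^{\ell-1}_\tau(\lambda)\subseteq\cal P^{\ell-1}_\tau(\lambda)$. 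Hence $\nu^\sharp=(\mu_2-1\ge\cdots\ge\mu_{\ell-1}-1)\in\cal P^{\ell-2}(\lambda)$. Appending $\mu_\ell-1$ (legitimate since $1\le\mu_\ell-1\le\mu_{\ell-1}-1$) gives $\mu^\sharp$, and deleting its last part returns $\nu^\sharp\in\cal P^{\ell-2}(\lambda)$; together with $\mu^\sharp\in\cal P^{\ell-1}$ (its largest part is $\mu_2-1\le\mu_1-1\le\ell-2$) this shows $\mu^\sharp\in\cal P^{\ell-1}_{\rm d}(\lambda)\subseteq\cal P^{\ell-1}(\lambda)$, as needed.

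The step I expect to be the crux is the passage ``$\nu\in\cal P^{\ell-1}(\lambda)\Rightarrow\nu\in\cal P^{\ell-1}_\tau(\lambda)$'' for $\nu$ with $\nu_1\le\ell-2$ and $\nu_{\ell-1}\ge 2$: this is exactly the content of the Proposition one level down, which dictates the shape of the induction, with the Lemma handling the boundary value $\nu_1=\ell-1$ and the hypothesis $\lambda\in\cal P^k_{\sq}$ entering only through the base case.
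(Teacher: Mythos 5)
Your proof is correct and follows essentially the same route as the paper's: the forward inclusion from the definitions and the Lemma, and the reverse inclusion by induction on $\ell$, reducing $\mu\in\cal P^\ell_\tau(\lambda)$ to the membership $(\mu_2-1\ge\cdots\ge\mu_\ell-1)\in\cal P^{\ell-1}(\lambda)$ via $\tau_{\ell-1}$-stability and then invoking the inductive hypothesis to move $\mu\setminus\{\mu_\ell\}$ into $\cal P^{\ell-1}_\tau(\lambda)$. The only differences are organizational (you case-split on $\nu_1=\ell-1$ versus $\nu_1\le\ell-2$ where the paper splits on $\nu\in\cal P^{\ell-1}_{\rm d}$ versus $\cal P^{\ell-1}_\tau$), and your base case is stated a bit more carefully than the paper's.
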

\begin{pf}
Notice  that $$\mu\in\cal P^\ell_{\rm d}(\lambda)\cap\cal P^\ell_\tau(\lambda)\implies \mu\setminus\{\mu_\ell\},\ \ \tau_\ell\mu\setminus\{\ell+1-\mu_1\}\in\cal P^{\ell-1}(\lambda),$$ and hence we get $$\mu_1\le\ell-1,\ \ \ell+1-\mu_\ell\le\ell-1,\ \ {\rm{i.e.}}\ \ \ell-1\ge\mu_1\ge\mu_\ell\ge 2.$$
To prove the reverse inclusion we proceed by induction on $\ell$. If $\ell\in\{k, k+1\}$ then there does not exist $\mu\in\cal P^\ell$ with $2\le\mu_\ell\le\mu_1\le\ell-1$ and hence induction begins.   For the inductive step,  we must prove that if $\mu\in\cal P^\ell_d(\lambda)$ is such that $\ell-1\ge\mu_1\ge\mu_\ell\ge 2$, then $\tau_\ell\mu\in\cal P^\ell_d(\lambda)$, i.e. that  $\tau_\ell\mu\setminus\{\ell+1-\mu_1\}\in\cal P^{\ell-1}(\lambda)$.
This is  equivalent to proving that $\tau_{\ell-1}( \tau_\ell\mu\setminus\{\ell+1-\mu_1\})\in\cal P^{\ell-1}(\lambda)$, i.e. that \begin{equation}\label{nu'} \mu'= \mu_2-1\ge\cdots\ge\mu_{\ell-1}-1\ge\mu_{\ell}-1\in\cal P^{\ell-1}(\lambda)\end{equation} and in fact we claim that   $\mu'\in\cal P^{\ell-2}_{\rm d}(\lambda).$ Consider the case when  $\mu\setminus\{\mu_\ell\}\in\cal P^{\ell-1}_{\rm d}(\lambda)$; then the induction hypothesis applies and we get $\tau_{\ell-1}(\mu\setminus\{\mu_\ell\})\in\cal P^{\ell-1}_{\rm d}(\lambda)$, i.e. that $$\nu= \ell-\mu_{\ell-1}\ge\cdots\ge\ell-\mu_2\in\cal P^{\ell-2}(\lambda),$$ and hence $$\mu'=\tau_{\ell-2}\nu=\mu_2-1\ge\cdots\ge\mu_{\ell-1}-1\in\cal P^{\ell-2}(\lambda),$$   and we are done.
Now suppose that  $\mu\setminus\{\mu_\ell\}\in\cal P^{\ell-1}_\tau(\lambda)$. This means precisely that \\
$\tau_{\ell-1}(\mu\setminus\{\mu_\ell\})\in\cal P^{\ell-1}_{\rm d}(\lambda)$ and the preceding argument repeats and proves this case. The other statements of the proposition are now clear.

 \end{pf}

 \begin{cor}\label{disjt}  For all $\ell\ge k$, we have  \begin{eqnarray*}\cal P^\ell(\lambda)&= \cal P^\ell_{\rm d}(\lambda)\ \ \bigsqcup \ \ \{\mu\in\cal P^\ell_\tau(\lambda):\mu_1=\ell\}\\&=\cal P_\tau^\ell(\lambda)\ \ \bigsqcup\ \ \{\mu\in\cal P^\ell_{\rm d}(\lambda):\mu_\ell=1\}.\end{eqnarray*} 
 \end{cor}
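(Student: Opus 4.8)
The plan is to read the decomposition straight off the preceding Proposition, with no new induction needed. Start from the defining equality $\cal P^\ell(\lambda)=\cal P^\ell_{\rm d}(\lambda)\cup\cal P^\ell_\tau(\lambda)$, so that to establish the first displayed identity it suffices to prove $\cal P^\ell_\tau(\lambda)\setminus\cal P^\ell_{\rm d}(\lambda)=\{\mu\in\cal P^\ell_\tau(\lambda):\mu_1=\ell\}$ together with the disjointness of $\cal P^\ell_{\rm d}(\lambda)$ from this last set.

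For the disjointness and the inclusion ``$\supseteq$'': by the Lemma, every $\mu\in\cal P^\ell_{\rm d}(\lambda)$ has $\mu_1\le\ell-1$, so no $\mu\in\cal P^\ell_\tau(\lambda)$ with $\mu_1=\ell$ can lie in $\cal P^\ell_{\rm d}(\lambda)$; this gives both the disjointness and ``$\supseteq$''. For the reverse inclusion, take $\mu\in\cal P^\ell_\tau(\lambda)$ with $\mu_1\le\ell-1$; since $\mu\in\cal P^\ell_\tau(\lambda)$ the Lemma gives $\mu_\ell\ge 2$, and since $\mu$ is a partition we automatically have $\mu_1\ge\mu_\ell$, so $\ell-1\ge\mu_1\ge\mu_\ell\ge 2$. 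By the Proposition, in the form describing the intersection as $\{\mu\in\cal P_\tau^\ell(\lambda):\ell-1\ge\mu_1\ge\mu_\ell\ge 2\}$, this forces $\mu\in\cal P^\ell_{\rm d}(\lambda)\cap\cal P^\ell_\tau(\lambda)$, in particular $\mu\in\cal P^\ell_{\rm d}(\lambda)$. Hence $\cal P^\ell_\tau(\lambda)\setminus\cal P^\ell_{\rm d}(\lambda)$ is exactly $\{\mu\in\cal P^\ell_\tau(\lambda):\mu_1=\ell\}$, and $\cal P^\ell(\lambda)=\cal P^\ell_{\rm d}(\lambda)\sqcup\{\mu\in\cal P^\ell_\tau(\lambda):\mu_1=\ell\}$ follows.

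The second identity I would obtain by applying the involution $\tau_\ell$ to the first, using $\tau_\ell\cal P^\ell(\lambda)=\cal P^\ell(\lambda)$ and $\tau_\ell\cal P^\ell_{\rm d}(\lambda)=\cal P^\ell_\tau(\lambda)$ from the definitions (hence also $\tau_\ell\cal P^\ell_\tau(\lambda)=\cal P^\ell_{\rm d}(\lambda)$, as $\tau_\ell$ is an involution). Since $(\tau_\ell\mu)_\ell=\ell+1-\mu_1$, the map $\tau_\ell$ carries $\{\mu\in\cal P^\ell_\tau(\lambda):\mu_1=\ell\}$ bijectively onto $\{\mu\in\cal P^\ell_{\rm d}(\lambda):\mu_\ell=1\}$, and a disjoint union is taken to a disjoint union, giving the second displayed equality. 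There is no serious obstacle here: the substantive content was already done in the Proposition, and the Corollary is essentially a repackaging of it. The only points to be careful about are to invoke the Proposition with the $\cal P^\ell_\tau$-description of the intersection so that the converse inclusion comes out cleanly, and to note that the inequality $\mu_1\ge\mu_\ell$ is free because $\mu$ is a partition.
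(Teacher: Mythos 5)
Your argument is correct and is essentially the paper's own proof: both reduce the claim to the Proposition's description of $\cal P^\ell_{\rm d}(\lambda)\cap\cal P^\ell_\tau(\lambda)$ together with the Lemma ($\mu\in\cal P^\ell_{\rm d}(\lambda)\Rightarrow\mu_1\le\ell-1$, $\mu\in\cal P^\ell_\tau(\lambda)\Rightarrow\mu_\ell\ge2$), and both obtain the second identity by applying $\tau_\ell$. The only difference is cosmetic — you argue the contrapositive ($\mu_1\le\ell-1$ forces membership in the intersection) where the paper argues directly that a $\mu$ outside $\cal P^\ell_{\rm d}(\lambda)$ must have $\mu_1=\ell$.
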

 \begin{pf} Suppose that $\mu\in\cal P^\ell(\lambda)$ and $\mu\notin\cal P^\ell_{\rm d}(\lambda)$. The proposition implies that we must then have either $\mu_1=\ell$ or $\mu_\ell=1$. Since $\mu\in\cal P^\ell_\tau(\lambda)$ this means by \eqref{min} that $\mu_\ell\ne 1$ and hence we have $\mu_1=\ell$ which proves the first equality of the corollary. The second follows by applying $\tau_\ell$.\end{pf}

\subsection{}   Given any $\mu= (\mu_1\ge\cdots\ge\mu_\ell)\in\cal P^\ell$, set $$\bod(\mu)=\{(\mu:j): 1\le j\le \mu_\ell\}\cup\{\tau_{\ell+1}(\mu:1)\}\subset\cal P^{\ell+1}.$$  It is easily checked that if  $\mu\ne \mu'$ then $$\bod(\mu)\cap\bod(\mu')=\emptyset.$$ For, it  is clear that $(\mu:j)=(\mu':j')$ for some $j,j'$ implies that $\mu=\mu'$. If $(\mu,j)=\tau_{\ell+1}(\mu':1)$ for some $j$, then we would have $\mu_1=\ell+1$ which is impossible since $\mu\in\cal P^\ell$. By Corollary \ref {disjt} we see that \begin{equation}\label{desd}\cal P^{\ell+1}(\lambda)=\bigsqcup_{\nu\in\cal P^\ell(\lambda)} \bod(\nu).\end{equation}

 Let $\lambda\in\cal P^k_{\sq}$ be such that $\lambda=\tau_k(\lambda)$ and define a tree $\bt_\lambda$ as follows. The set of vertices of the tree is  $$\cal P(\lambda)=\bigsqcup_{\ell\ge k}\cal P^\ell(\lambda),$$ and two vertices  $\mu,\nu\in\cal P(\lambda)$ are connected by an edge precisely if $\nu\in\bod(\mu)$ or vice-versa. The tree $\bt_\lambda$ is clearly rooted at $\lambda$ and the elements of $\cal P^\ell(\lambda)$ are those vertices with a path of length $\ell-k$ to the root.   The vertices of the tree at any given level come with a natural total order defined as follows. Suppose that we have fixed an ordering of the vertices  $\cal P^{\ell}(\lambda)$; then the order on $\cal P^{\ell+1}(\lambda)$ is as follows:
 \begin{gather*} \nu\prec \nu' \implies \mu\prec\mu'\ \ \forall\ \   \mu\in\bod(\nu)\ \  \mu'\in\bod(\nu')
 \end{gather*} and the ordering on $\bod(\nu)$ is given by $$(\nu:1)\prec(\nu: 2)\prec\cdots\prec(\nu:\nu_\ell)\prec\tau(\nu:1).$$

In the case when $\lambda\ne\tau_k\lambda$ the preceding construction gives a forest of two trees $\boF_\lambda$ rooted at $\lambda$ and $\tau_k\lambda$ respectively.

 \subsection{} The next result shows that the tree  $\bt_\lambda$  (resp. forest) is  independent of the choice of $\lambda$ and $k$ if $\lambda=\tau_k\lambda$ (resp. $\lambda\ne\tau_k\lambda$). In particular, it proves a weaker form of Theorem 1(i). \begin{prop} Let $\lambda^{(j)}\in\cal P_{\rm sq}^{k_j}$ for $j=1,2$  and assume that $k_1\ge k_2$ and also that $\tau_{k_j}\lambda^{(k_j)}=\lambda^{(k_j)}$. There exists a bijection of sets $\psi:\cal P(\lambda^{(1)})\to\cal P(\lambda^{(2)})$ such that
  \begin{gather}\nonumber\mu=\mu_1\ge\mu_2\ge\cdots\mu_\ell\in\cal P^\ell(\lambda^{(1)})\implies  \ \ \ \psi(\mu)=\mu_1'\ge\cdots\ge\mu_{\ell-k_1+k_2}'\in\cal P^{\ell-k_1+k_2}(\lambda^{(2)}), \\ \label{fl} {\rm and}\ \ \ \mu_1-\mu_1'=k_1-k_2,\ \ \mu_\ell=\mu_{\ell-k_1+k_2}'.\end{gather}
Moreover, $\psi$ induces an isomorphism $\bt_{\lambda^{(1)}}\cong \bt_{\lambda^{(2)}}.$ Analogous statements hold if $\tau_{k_j}\lambda^{(k_j)}\ne \lambda^{(k_j)}$ for $j=1,2.$
 \end{prop}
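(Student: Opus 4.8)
The plan is to build the bijection $\psi$ level by level, using the recursive description \eqref{desd}, namely $\cal P^{\ell+1}(\lambda)=\bigsqcup_{\nu\in\cal P^\ell(\lambda)}\bod(\nu)$, together with the fact that for any $\mu\in\cal P^\ell$ the set $\bod(\mu)$ has exactly $\mu_\ell+1$ elements and depends in a transparent way on the pair $(\mu_1,\mu_\ell)$ and on $\ell$. First I would treat the base case: since $\lambda^{(1)}\in\cal P^{k_1}_{\sq}$ and $\lambda^{(2)}\in\cal P^{k_2}_{\sq}$, both $\cal P^{k_1}(\lambda^{(1)})=\{\lambda^{(1)}\}$ and $\cal P^{k_2}(\lambda^{(2)})=\{\lambda^{(2)}\}$ are singletons (using $\lambda^{(j)}=\tau_{k_j}\lambda^{(j)}$), so I set $\psi(\lambda^{(1)})=\lambda^{(2)}$. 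Here $\mu_1=k_1$, $\mu_1'=k_2$, so $\mu_1-\mu_1'=k_1-k_2$, and $\mu_\ell=1=\mu_{\ell-k_1+k_2}'$ since both partitions are square; so \eqref{fl} holds at the root.

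Next comes the inductive step, which is the heart of the argument. Suppose $\psi$ has been defined on $\cal P^\ell(\lambda^{(1)})\to\cal P^{\ell-k_1+k_2}(\lambda^{(2)})$ as a bijection satisfying \eqref{fl}. Fix $\nu\in\cal P^\ell(\lambda^{(1)})$ with image $\nu'=\psi(\nu)\in\cal P^{\ell-k_1+k_2}(\lambda^{(2)})$; by hypothesis $\nu_\ell=\nu'_{\ell-k_1+k_2}$ and $\nu_1-\nu'_1=k_1-k_2$. I would define $\psi$ on $\bod(\nu)$ by matching up the two orderings: send $(\nu:j)\mapsto(\nu':j)$ for $1\le j\le\nu_\ell=\nu'_{\ell-k_1+k_2}$, and send $\tau_{\ell+1}(\nu:1)\mapsto\tau_{\ell-k_1+k_2+1}(\nu':1)$. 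One checks this is well-defined: for $j\le\nu_\ell$ we need $0<j\le\nu_\ell$ for $(\nu:j)$ to be a legal partition, and the same bound makes $(\nu':j)$ legal since $\nu_\ell=\nu'_{\ell-k_1+k_2}$. Then I verify \eqref{fl} for each new vertex. For $(\nu:j)$: its first part is $\nu_1$, its image's first part is $\nu'_1$, difference $k_1-k_2$; its last part is $j$, equal to the image's last part. For $\tau_{\ell+1}(\nu:1)$: its first part is $(\ell+1)+1-1=\ell+1$ and its last part is $(\ell+1)+1-\nu_1$; the image's first part is $(\ell-k_1+k_2+1)+1-1=\ell-k_1+k_2+1$, so the difference is $k_1-k_2$, and the image's last part is $(\ell-k_1+k_2+1)+1-\nu'_1=\ell-k_1+k_2+2-\nu'_1=\ell+2-\nu_1$, which matches. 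Since $\bod$ is a disjoint decomposition at each level (from \eqref{desd}) and $\psi$ restricted to each $\bod(\nu)$ is a bijection onto $\bod(\nu')$ with $\nu\mapsto\nu'$ already a bijection at the previous level, $\psi$ is a bijection $\cal P^{\ell+1}(\lambda^{(1)})\to\cal P^{\ell+1-k_1+k_2}(\lambda^{(2)})$. This completes the induction.

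Finally, that $\psi$ is a tree isomorphism is essentially built in: two vertices of $\bt_{\lambda^{(1)}}$ are joined by an edge iff one lies in $\bod$ of the other, and by construction $\mu\in\bod(\nu)\iff\psi(\mu)\in\bod(\psi(\nu))$; moreover $\psi$ preserves levels (it maps $\cal P^\ell(\lambda^{(1)})$ to $\cal P^{\ell-k_1+k_2}(\lambda^{(2)})$, a shift of the level index by the constant $k_1-k_2$) and, by the compatibility of $\psi$ with the orderings on $\bod(\nu)$ and $\bod(\nu')$, it preserves the total order on each level, hence gives an isomorphism of ordered rooted trees. The non-self-dual case $\tau_{k_j}\lambda^{(j)}\ne\lambda^{(j)}$ is handled the same way, except the base level now has two vertices $\lambda^{(j)},\tau_{k_j}\lambda^{(j)}$; one sets $\psi(\lambda^{(1)})=\lambda^{(2)}$ and $\psi(\tau_{k_1}\lambda^{(1)})=\tau_{k_2}\lambda^{(2)}$ and runs the identical induction on each of the two trees of the forest, noting that $\tau_{k_1}\lambda^{(1)}$ is again square so \eqref{fl} holds at that root as well.

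I expect the main obstacle to be purely bookkeeping rather than conceptual: one must be careful that the index-shift $\ell\mapsto\ell-k_1+k_2$ interacts correctly with the $\tau_{\ell+1}$ operations (whose definition involves the ambient size), and that the invariant \eqref{fl} — specifically the two clauses $\mu_1-\mu_1'=k_1-k_2$ and $\mu_\ell=\mu'_{\ell-k_1+k_2}$ — is exactly the right hypothesis to make the $\bod$-construction commute with $\psi$. Once one sees that $\bod(\nu)$ depends only on $\ell$, $\nu_1$ (through the single extra vertex $\tau_{\ell+1}(\nu:1)$) and $\nu_\ell$ (through the range of $j$), the induction goes through mechanically.
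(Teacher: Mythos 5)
Your proposal is correct and follows essentially the same route as the paper: an inductive construction of $\psi$ level by level via the decomposition \eqref{desd}, with $(\nu:j)\mapsto(\psi(\nu):j)$ and $\tau_{\ell+1}(\nu:1)\mapsto\tau_{\ell-k_1+k_2+1}(\psi(\nu):1)$. You simply spell out the verification of the invariant \eqref{fl} that the paper leaves as "easily checked."
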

 \begin{pf} We define the map $\psi$ inductively.  Thus we set $\psi(\lambda^{(1)})=\lambda^{(2)}$ and note that the conditions of the proposition are satisfied. Assume now that we have defined $\psi:\cal P^\ell(\lambda^{(1)})\to\cal P^{\ell-k_1+k_2}(\lambda^{(2)})$ suitably  and for $\nu\in\cal P^{\ell}(\lambda^{(1)})$ define $\psi:\bod(\nu)\to\bod(\psi(\nu))$ by $$\psi(\nu:j)=(\psi(\nu):j),\ \ 1\le j\le \nu_\ell,\ \ \psi(\tau_{\ell+1}(\nu:1))=\tau_{\ell-k_1+k_2+1}(\psi(\nu:1)).$$ Since $\nu_\ell=\psi(\nu)_{\ell-k_1+k_2}$ it follows by using equation \eqref{desd} that $\psi$ extends to a bijection from $\cal P^{\ell+1}(\lambda^{(1)})\to\cal P^{\ell-k_1+k_2+1}(\lambda^{(2)})$ and it is easily checked that the extension satisfies \eqref{fl}. It is now immediate that the corresponding trees are isomorphic.
 \end{pf}
\subsection{} We shall now see that the propagation of the tree $\bt_\lambda$ is independent of $\lambda$. For this, we define another rooted tree $\bt$ as follows. The vertices of the tree will be labeled either by a pair of integers or a single integer.  The root $v_0$ of the tree will be labeled   $(2,2)$. The root has two descendants and they  have labels $(2,3)$ and $(3)$, respectively. The tree propagates as follows.  A vertex labeled $(i,j)$ will have $i$ descendants. The first $(i-1)$ descendants   will be labeled $(k,j+1)$, for $2\leq k\leq i$, and the last descendant will be labeled$(j+1)$. A vertex labeled $(r)$ will have $r$ descendants,, the first $(r-1)$ are labeled $(k,3)$ for $2\leq k\leq r$, and the last one is labeled  $(3)$. Note that the label $(2,2)$ never appears again and is uniquely associated to the root. The following picture shows the labeling at the first few levels.

\setlength{\unitlength}{5mm}
\begin{picture}(28,17)
 \linethickness{.2mm}

 \put(12,16.5){\circle*{.3}}
 \put(12,16.5){\line(-2,-1){7}}
 \put(12,16.5){\line(2,-1){7}}
 \put(12.5,16.5){\tiny $(2,2)$}

 \put(5,13){\circle*{.3}}
 \put(19,13){\circle{.3}}
 \put(5,13){\line(-3,-4){3}}
 \put(5,13){\line(3,-4){3}}
 \put(19,13){\line(-5,-4){5}}
 \put(19,13){\line(0,-1){4}}
 \put(19,13){\line(3,-2){6}}
 \put(6,13){\tiny $(2,3)$}
 \put(19.5,13){\tiny$3$}

 \put(2,9){\circle*{.3}}
 \put(8,9){\circle{.3}}
 \put(14,9){\circle*{.3}}
 \put(19,9){\circle*{.3}}
 \put(25,9){\circle{.3}}
 \put(2,9){\line(-1,-4){1}}
 \put(2,9){\line(1,-4){1}}
 \put(8,9){\line(-3,-4){3}}
 \put(8,9){\line(-1,-4){1}}
 \put(8,9){\line(1,-4){1}}
 \put(8,9){\line(3,-4){3}}
 \put(14,9){\line(-1,-4){1}}
 \put(14,9){\line(1,-4){1}}
 \put(19,9){\line(-1,-2){2}}
 \put(19,9){\line(0,-1){4}}
 \put(19,9){\line(1,-2){2}}
 \put(25,9){\line(-1,-2){2}}
 \put(25,9){\line(0,-1){4}}
 \put(25,9){\line(1,-2){2}}
 \put(2.5,9){\tiny $(2,4)$}
 \put(8.5,9){\tiny $(4)$}
 \put(14.5,9){\tiny $(2,3)$}
 \put(19.5,9){\tiny $(3,3)$}
 \put(25.5,9){\tiny $(3)$}
,
 \put(.8,5){\circle*{.3}}
 \put(2.8,5){\circle{.3}}
 \put(4.8,5){\circle*{.3}}
 \put(6.8,5){\circle*{.3}}
 \put(8.8,5){\circle*{.3}}
 \put(10.8,5){\circle{.3}}
 \put(12.8,5){\circle*{.3}}
 \put(14.8,5){\circle{.3}}
 \put(16.8,5){\circle*{.3}}
 \put(18.8,5){\circle*{.3}}
 \put(20.8,5){\circle{.3}}
 \put(22.8,5){\circle*{.3}}
 \put(24.8,5){\circle*{.3}}
 \put(26.8,5){\circle{.3}}
 \put(0.2,4.4){\tiny $(2,5)$}
 \put(2.2,4.4){\tiny $(5)$}
 \put(4.2,4.4){\tiny $(2,3)$}
 \put(6.2,4.4){\tiny $(3,3)$}
 \put(8.2,4.4){\tiny $(4,3)$}
 \put(11,4.4){\tiny $(3)$}
 \put(12.2,4.4){\tiny $(2,4)$}
 \put(14.2,4.4){\tiny $(4)$}
 \put(16.2,4.4){\tiny $(2,4)$}
 \put(18.2,4.4){\tiny $(3,4)$}
 \put(20.2,4.4){\tiny $(4)$}
 \put(22.2,4.4){\tiny $(2,3)$}
 \put(24.2,4.4){\tiny $(3,3)$}
 \put(27,4.4){\tiny $(3)$}
\end{picture}

\begin{prop} For $\lambda\in\cal P^k_{\rm sq}$, we have an isomorphism of trees $\psi: \bt_\lambda\cong \bt$ such that  $\psi(\lambda)=v_0$ and if $\nu\in\cal P^\ell(\lambda)$ then the vertex $\psi(\nu)$ has label $(\nu_\ell+1,\ell+2-\nu_1)$, if $\nu\in\cal P_{\rm d}^\ell(\lambda)$, and label $(\nu_{\ell+1})$ if $ \nu\in\cal P^\ell_\tau(\lambda).$
\end{prop}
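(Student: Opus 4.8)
The plan is to construct the isomorphism $\psi\colon\bt_\lambda\to\bt$ inductively on levels, using the disjoint union description \eqref{desd} of $\cal P^{\ell+1}(\lambda)$ in terms of the sets $\bod(\nu)$ and matching it against the propagation rule for $\bt$. First I would set $\psi(\lambda)=v_0$ and check the base case: since $\lambda\in\cal P^k_{\rm sq}$ has $\lambda_1=k$ and $\lambda_k=1$, we have $\nu=\lambda\in\cal P^k_{\rm d}(\lambda)$ trivially (it is the root of the ``$d$''-type), and the prescribed label is $(\lambda_k+1,k+2-\lambda_1)=(2,2)$, which matches. Here one should note that the label formula is really a statement about a single vertex $\nu$, but $\nu$ may lie in $\cal P^\ell_{\rm d}(\lambda)\cap\cal P^\ell_\tau(\lambda)$; by the Proposition computing that intersection, such $\nu$ satisfy $\ell-1\ge\nu_1\ge\nu_\ell\ge 2$, and one checks that the two formulas $(\nu_\ell+1,\ell+2-\nu_1)$ and $(\nu_{\ell+1})$ — wait, the second is only meant for the complementary piece — so I would instead use Corollary~\ref{disjt}: write $\cal P^\ell(\lambda)=\cal P^\ell_{\rm d}(\lambda)\sqcup\{\mu\in\cal P^\ell_\tau(\lambda):\mu_1=\ell\}$, assign the pair-label to $d$-vertices and the single-label $(\nu_{\ell+1})$ — i.e. label $(\nu_{\ell+1})$ meaning the integer "$\nu$ has $\ell+1$" no; the single label should be read off the unique descendant index — to the $\tau$-with-$\mu_1=\ell$ vertices, and verify consistency on the overlap using the formula $\mu_\ell=\ell+1-(\tau_\ell\mu)_1$.

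Next, the inductive step: assume $\psi$ is defined on $\cal P^\ell(\lambda)$ with the stated labels, and fix $\nu\in\cal P^\ell(\lambda)$. By \eqref{desd} the children of $\nu$ in $\bt_\lambda$ are exactly the elements of $\bod(\nu)=\{(\nu:j):1\le j\le\nu_\ell\}\cup\{\tau_{\ell+1}(\nu:1)\}$, in the given total order, so $\nu$ has $\nu_\ell+1$ children. I would split into two cases according to whether $\psi(\nu)$ has a pair-label $(i,j)$ with $i=\nu_\ell+1$, $j=\ell+2-\nu_1$, or a single label $(r)$ with $r=\nu_{\ell}+1$ — actually in the $\tau$-case the label is designed so that $r$ equals the number of children, and one checks $r=\nu_\ell+1$ as well, since for a $\tau$-vertex with $\mu_1=\ell$ the relevant descendant data is controlled by $\mu_\ell$. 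In either case $\bt$ prescribes that the first $\nu_\ell$ children get labels $(k',\cdot)$ for $k'=2,\dots,\nu_\ell+1$ and the last child gets a single label. Now for $1\le j\le\nu_\ell$ the child $(\nu:j)\in\cal P^{\ell+1}_{\rm d}(\lambda)$ has last part $j$ and first part $\nu_1$, so its prescribed label is $(j+1,(\ell+1)+2-\nu_1)=(j+1,\ell+3-\nu_1)$; as $j$ runs $1,\dots,\nu_\ell$ the first coordinate runs $2,\dots,\nu_\ell+1$, matching, and the second coordinate $\ell+3-\nu_1$ is the value $j_{\rm old}+1=(\ell+2-\nu_1)+1$ from the $\bt$-rule, matching. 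The last child $\tau_{\ell+1}(\nu:1)\in\cal P^{\ell+1}_\tau(\lambda)$ has $(\tau_{\ell+1}(\nu:1))_1=\ell+1$, so it falls into the single-label class, and one computes its label is $((\ell+1)+1)=(\ell+2)$; on the $\bt$ side the last child of $(i,j)$ is labeled $(j+1)=(\ell+3-\nu_1)$ — so I need $\ell+2=\ell+3-\nu_1$, i.e. $\nu_1=1$?? That can't be right in general, so the correct reading must be that the ``single label'' recorded in the Proposition is $(\nu_{\ell+1})$ evaluated at the \emph{child}, whose own data gives the $\bt$-label directly; I would recheck the indexing of $\tau_{\ell+1}(\nu:1)$: its parts are $\ell+1-\nu_1\ge\cdots\ge\ell+1-\nu_\ell\ge\ell$ wait it has $\ell+1$ parts, last part $\ell+1-(\nu:1)_1=\ell+1-\nu_1$; hmm — so its label as a $\tau$-vertex is the integer $j+1$ where $j=\ell+2-\nu_1$ is the second coordinate of $\psi(\nu)$, giving $(\ell+3-\nu_1)$, and the claim in the Proposition ``label $(\nu_{\ell+1})$ if $\nu\in\cal P^\ell_\tau(\lambda)$'' must mean the label is the integer ``$\ell+1$ minus (something)'' — in any case the bookkeeping identity to verify is exactly $\ell+3-\nu_1 = (\text{second coord of }\psi(\nu))+1$, which holds by the inductive hypothesis.

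Then I would handle the case $\psi(\nu)=(r)$ a single-label vertex, i.e. $\nu\in\cal P^\ell_\tau(\lambda)$ with $\nu_1=\ell$: by the $\bt$-rule such a vertex has $r$ children, the first $r-1$ labeled $(k',3)$ for $k'=2,\dots,r$ and the last labeled $(3)$; on the $\bt_\lambda$ side $\nu$ has $\nu_\ell+1$ children $(\nu:1),\dots,(\nu:\nu_\ell),\tau_{\ell+1}(\nu:1)$, the first $\nu_\ell$ of which lie in $\cal P^{\ell+1}_{\rm d}(\lambda)$ with labels $(j+1,\ell+3-\nu_1)=(j+1,3)$ since $\nu_1=\ell$, matching (and giving $r=\nu_\ell+1$), and the last child has $\mu_1=\ell+1$, so single label, value $(3)$ after the same computation; I'd confirm $\ell+3-\nu_1=3$. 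So both cases reduce to the same two arithmetic identities. The main obstacle will be getting the indexing conventions exactly right — in particular pinning down precisely what the single-integer label $(\nu_{\ell+1})$ in the Proposition's statement denotes and reconciling the two label formulas on the overlap $\cal P^\ell_{\rm d}(\lambda)\cap\cal P^\ell_\tau(\lambda)$; once the conventions are fixed, the induction is a routine check that $\psi$ maps $\bod(\nu)$ bijectively and order-preservingly onto the child-set of $\psi(\nu)$ with the correct labels, and surjectivity/injectivity of $\psi$ at each level follows from \eqref{desd} together with the fact that every vertex of $\bt$ arises as a child of a unique vertex.
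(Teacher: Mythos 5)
Your overall strategy is exactly the paper's: define $\psi$ level by level, sending $\bod(\nu)$ order-preservingly onto the children of $\psi(\nu)$, checking labels, and letting \eqref{desd} give bijectivity. The bookkeeping on the $d$-side (children $(\nu:j)$ getting labels $(j+1,\ell+3-\nu_1)$, first coordinates running over $2,\dots,\nu_\ell+1$, second coordinate equal to $j_{\mathrm{old}}+1$) is correct and matches the paper. The gap is that the $\tau$-side is never actually resolved, and that is where the content of the statement sits. First, you do not pin down the single label: the statement's ``$(\nu_{\ell+1})$'' is a misprint for $(\nu_\ell+1)$, i.e.\ (last part of $\nu$) plus one --- which is also $\#\bod(\nu)$, the number of descendants, exactly as it must be for a $\bt$-vertex labelled $(r)$ to have $r$ children. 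Without this, your inductive hypothesis does not tell you the value $r$ of the label of a $\tau$-vertex at level $\ell$, so in your second case you cannot legitimately assert that $\psi(\nu)=(r)$ has the right number of children; instead you extract ``$r=\nu_\ell+1$'' from the very matching you are trying to establish. Second, you miscompute $\tau_{\ell+1}(\nu:1)$: since $\tau_L$ sends $\lambda_i\mapsto L+1-\lambda_i$ and here $L=\ell+1$, its last part is $\ell+2-\nu_1$, not $\ell+1-\nu_1$.

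With both corrections the ``contradiction'' $\nu_1=1$ that derails your first case evaporates: the last child $\mu=\tau_{\ell+1}(\nu:1)$ has $\mu_{\ell+1}=\ell+2-\nu_1$, hence prescribed label $(\mu_{\ell+1}+1)=(\ell+3-\nu_1)=(j+1)$ where $j=\ell+2-\nu_1$ is the second coordinate of $\psi(\nu)$ --- precisely what the propagation rule of $\bt$ assigns to the last descendant of a vertex labelled $(i,j)$; and in your second case $\nu_1=\ell$ gives $(\ell+3-\nu_1)=(3)$ as required. As submitted, however, the argument records a false identity, cycles through several guessed readings of the label, and explicitly defers the resolution (``once the conventions are fixed\dots''), so the induction is not closed and the label claim for $\tau$-vertices --- half of what the proposition asserts --- is not verified.
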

\begin{proof}
We define the isomorphism inductively and note that  mapping the root $\lambda$ of $\bt_\lambda$ to the root $v_0$ of $\bt$ gives the desired labels. Let $\bt^s$, $\bt^s_\lambda$ be the subtree of $\bt$ and $\bt_\lambda$ respectively, consisting of the first $s$--propagations of the root. Assume that we have defined the  isomorphism $\psi: (\bt_\lambda)_{s-1}\to \bt_{s-1}$ with the desired properties.  By \eqref{desd} it suffices to show that we can extend $\psi$ to a map from $\bod(\nu)\to\bt$ for all $\nu\in\cal P^{s+k}(\lambda)$. Suppose first that $\nu\in\cal P^{s+k}_{\rm d}(\lambda)$, in which case $\psi(\nu)$ has label $(\nu_{s+k}+1,s+k+2-\nu_1)$. This means that the vertex $\psi(\nu)$ has $\nu_{s+k}$--descendants with labels $(j,s+k+3-\nu_1)$, $2\le j\le \nu_{s+k}+1$ and one descendant with label $(s+k+3-\nu_1)$. Thus if $(\nu:j)\in\bod(\nu)$, we let $\psi((\nu:j))$ be the vertex which is the descendant of $\psi(\nu)$ with label $(j+1,s+k+3-\nu_1)$ and $\psi$ maps  $\tau_{s+k+1}(\nu:1)$ to the vertex with label $(s+k+3-\nu_1)$.

Similarly, if $\nu\in\cal P^{s+k}_\tau(\lambda)$ then $\psi(\nu)$ has label $(s+k+1-\nu_1)$ and hence the vertex $\psi(\nu)$ has  $s+k-\nu_1$--descendants with labels $\left\{(k,3): 2\leq k\leq s+k+1-\nu_1\right\}$ and  one descendant with label $\left\{(3)\right\}.$
This time, we  assign to a descendant $(\nu:m)$ the vertex labeled $(m+1,3)$ and to the descendant  $\tau_{s+k+1}(\nu:1)$ to the vertex labeled $(3)$.
This establishes a bijection between the vertices of $\bt_\lambda$ and the vertices of $\bt$, which by construction is now an isomorphism of trees.
\end{proof}

\vfill\eject

  \section{Proof of Theorem 1} \subsection{}  For $r\in\bz_+$,  set \begin{gather*}\cal P^\ell(\lambda, r)=\{\mu\in\cal P^\ell(\lambda):\mu_\ell\ge r\}, \\ \\
  e_{\ell,r}(\lambda)=\# \cal P^\ell(\lambda, r).\ \ \end{gather*}  The subsets $\cal P^\ell_{\rm d}(\lambda,r)$, $\cal P^\ell_\tau(\lambda,r)$ are defined in the obvious way. Note that by applying $\tau_\ell$, we get \begin{equation}\label{inv} e_{\ell,r}(\lambda)=\#\{\mu\in\cal P^\ell(\lambda):\mu_1\le \ell+1-r\}.\end{equation} To prove part (i) of the  Theorem we must prove that $e_{\ell,1}$ is $c_{\ell-k+1}$. We do this by showing that the $e_{\ell,r}$ satisfy a  suitable recurrence relation and by determining the initial conditions; this is the content of the next proposition.

 \begin{prop} \label{recurs}Let $r,\ell\in\bn$ and assume $\ell\ge k$. We have \begin{equation}\label{recurs1} e_{\ell,r}(\lambda)=\sum_{s\ge r-1}e_{\ell-1,s
 }(\lambda)= e_{\ell-1,r-1}(\lambda)+e_{\ell,r+1}(\lambda).\end{equation}Moreover, \begin{gather}\label{ind4} e_{\ell,r}(\lambda)=0, \ \ r>\ell-k+1,\\ \label{ind5} e_{\ell, \ell-k+1}(\lambda)=\#\cal P^k(\lambda).\end{gather}

 \end{prop}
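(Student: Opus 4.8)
The plan is to establish the three assertions in turn, using the decomposition of $\cal P^\ell(\lambda)$ coming from Corollary~\ref{disjt} together with the map $\bod$ and equation \eqref{desd}. The cleanest route to the recurrence \eqref{recurs1} is to work directly with the disjoint union $\cal P^\ell(\lambda)=\bigsqcup_{\nu\in\cal P^{\ell-1}(\lambda)}\bod(\nu)$. Given $\nu\in\cal P^{\ell-1}(\lambda)$, the set $\bod(\nu)=\{(\nu:j):1\le j\le\nu_{\ell-1}\}\cup\{\tau_\ell(\nu:1)\}$ contributes to $\cal P^\ell(\lambda,r)$ exactly those $(\nu:j)$ with $j\ge r$ (there are $\nu_{\ell-1}-r+1$ of these when $\nu_{\ell-1}\ge r$, and none otherwise), plus the element $\tau_\ell(\nu:1)$, whose last part is $\ell+1-\nu_1\ge 2$, so it lies in $\cal P^\ell(\lambda,r)$ iff $r\le \ell+1-\nu_1$, i.e. iff $\nu_1\le \ell+1-r$. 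Summing over $\nu$, the first contribution gives $\sum_{\nu:\ \nu_{\ell-1}\ge r}(\nu_{\ell-1}-r+1)=\sum_{s\ge r}(s-r+1)\#\{\nu:\nu_{\ell-1}=s\}=\sum_{s\ge r}e_{\ell-1,s}(\lambda)$ after an Abel-summation/telescoping rearrangement (grouping $\{\nu_{\ell-1}=s\}$ into $\{\nu_{\ell-1}\ge s\}\setminus\{\nu_{\ell-1}\ge s+1\}$); the second contribution counts $\#\{\nu\in\cal P^{\ell-1}(\lambda):\nu_1\le\ell+1-r\}$, which by \eqref{inv} (applied with $\ell-1$ in place of $\ell$ and $r-1$ in place of $r$, since $(\ell-1)+1-(r-1)=\ell+1-r$) equals $e_{\ell-1,r-1}(\lambda)$. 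Adding, $e_{\ell,r}(\lambda)=e_{\ell-1,r-1}(\lambda)+\sum_{s\ge r}e_{\ell-1,s}(\lambda)=\sum_{s\ge r-1}e_{\ell-1,s}(\lambda)$, which is the first equality; the second, $\sum_{s\ge r-1}e_{\ell-1,s}=e_{\ell-1,r-1}+\sum_{s\ge r}e_{\ell-1,s}=e_{\ell-1,r-1}+e_{\ell,r+1}$, is then immediate from the first equality read at $(\ell,r+1)$.

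For \eqref{ind4}, the plan is a straightforward induction on $\ell\ge k$. The base case $\ell=k$ holds because every $\mu\in\cal P^k(\lambda)=\{\lambda,\tau_k\lambda\}$ satisfies $\mu_k\le k=\ell-k+1$, so $e_{k,r}(\lambda)=0$ for $r>1$. For the inductive step, if $r>\ell-k+1$ then $r-1>(\ell-1)-k+1$, so by the inductive hypothesis every term $e_{\ell-1,s}(\lambda)$ with $s\ge r-1$ vanishes, and the first equality of \eqref{recurs1} gives $e_{\ell,r}(\lambda)=0$. For \eqref{ind5}, I would again induct on $\ell$: at $\ell=k$, $e_{k,1}(\lambda)=\#\cal P^k(\lambda,1)=\#\cal P^k(\lambda)$ since $\mu_k\ge1$ always; for the step, apply the first equality of \eqref{recurs1} with $r=\ell-k+1$ to get $e_{\ell,\ell-k+1}(\lambda)=\sum_{s\ge \ell-k}e_{\ell-1,s}(\lambda)=e_{\ell-1,\ell-k}(\lambda)+\sum_{s\ge \ell-k+1}e_{\ell-1,s}(\lambda)$, and the sum on the right vanishes by \eqref{ind4} (its terms have index $s\ge(\ell-1)-k+2$), leaving $e_{\ell,\ell-k+1}(\lambda)=e_{\ell-1,(\ell-1)-k+1}(\lambda)=\#\cal P^k(\lambda)$ by induction.

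I expect the main obstacle to be purely bookkeeping in the recurrence step: making sure the two pieces of $\bod(\nu)$ are counted without double-counting and that the reindexing between "$\#\{\nu_{\ell-1}=s\}$" weighted by $(s-r+1)$ and the cumulative counts $e_{\ell-1,s}$ is done correctly, together with the clean identification of the $\tau_\ell(\nu:1)$ contribution via \eqref{inv}. Everything else is routine induction once \eqref{recurs1} is in hand. One small point worth checking carefully: in the decomposition \eqref{desd} the sets $\bod(\nu)$ are genuinely disjoint (already noted in the text), so the counts add with no correction, and the element $\tau_\ell(\nu:1)$ is never of the form $(\nu':j)$, so the split of $\bod(\nu)$ into its "$(\nu:j)$" part and its "$\tau_\ell(\nu:1)$" part is a genuine partition.
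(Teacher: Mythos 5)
Your argument is correct, and it splits naturally into two halves relative to the paper. For the recurrence \eqref{recurs1} you are doing essentially what the paper does: the paper counts $\cal P^\ell_{\rm d}(\lambda,r)$ as $\sum_{s\ge r}e_{\ell-1,s}(\lambda)$ and identifies the leftover piece $\{\mu\in\cal P^\ell_\tau(\lambda,r):\mu_1=\ell\}$ with $\{\nu\in\cal P^{\ell-1}(\lambda):\nu_1\le\ell+1-r\}$ via $\nu\mapsto\tau_\ell(\nu:1)$ and then invokes \eqref{inv}; your fiber-by-fiber count over $\bod(\nu)$ using \eqref{desd} is the same computation regrouped, since \eqref{desd} is itself a restatement of Corollary \ref{disjt}. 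Where you genuinely diverge is in the initial conditions: the paper proves a structural lemma about the sets themselves ($\mu\in\cal P^\ell(\lambda)\Rightarrow\mu_\ell\le\ell-k+1$ and $\mu_1\ge k$; the set $\{\mu_\ell=\ell-k+1\}$ is contained in $\{\mu_1=\ell\}$; and an explicit bijection $\{\mu\in\cal P^\ell(\lambda):\mu_\ell=\ell-k+1\}\to\{\mu\in\cal P^{\ell-1}(\lambda):\mu_{\ell-1}=\ell-k\}$), and reads off \eqref{ind4} and \eqref{ind5} from that. You instead derive \eqref{ind4} and \eqref{ind5} purely formally from \eqref{recurs1} by induction on $\ell$, needing only the $\ell=k$ base case. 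Your route is shorter and makes clear that the initial conditions are forced by the recurrence plus the trivial level-$k$ data; the paper's route yields extra information about which partitions realize the extreme value $\mu_\ell=\ell-k+1$, which is not needed for this proposition. One slip to fix in your base case: you write $\mu_k\le k=\ell-k+1$, but at $\ell=k$ one has $\ell-k+1=1$, and the bound $\mu_k\le k$ would only give vanishing for $r>k$. What you actually need, and what is true, is that both elements of $\cal P^k(\lambda)=\{\lambda,\tau_k\lambda\}$ have last part exactly $1$, since $\lambda\in\cal P^k_{\sq}$ forces $\lambda_k=1$ and $(\tau_k\lambda)_k=k+1-\lambda_1=1$; with that correction the induction closes. (Both you and the paper tacitly use the convention $e_{\ell-1,0}=e_{\ell-1,1}$ when reading \eqref{recurs1} at $r=1$; it would not hurt to say so.)
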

\subsection{}  Before proving the proposition, we deduce part (i) of the theorem. It is clear that the system of recurrence relations with the initial conditions given in the proposition have a unique solution. It is also well--known \cite{Aigner} and is a simple matter to check that if we set $$e_{\ell, r}=\begin{cases} b_{\ell-k-r+1,r},\ \ {\rm if}\ \ \lambda=\tau_k\lambda,\\
2b_{\ell-k-r+1,r},\ \ {\rm if}\ \ \lambda\ne \tau_k\lambda\end{cases}$$
 then the recurrence relation and the initial conditions are satisfied. Since $$e_{\ell,1}(\lambda)=\#\cal P^\ell(\lambda) =\begin{cases}c_{\ell-k+1}	 \ \ {\rm if}\ \lambda=\tau(\lambda)\\
2c_{\ell-k+1}	\ \ {\rm else}\end{cases}$$ part (i) is proved.
 \subsection{} To prove \eqref{recurs1}  it is clear  that $$\cal P^\ell_{\rm d}(\lambda,r)=\bigsqcup_{s\ge r}\{(\mu:s):\mu\in\cal P^{\ell-1}(\lambda,s)\},$$ and hence $$\#\cal P^\ell_{\rm d}(\lambda,r)= \sum_{s\ge r}e_{\ell-1,s}(\lambda).$$By Corollary \ref{disjt} we may write $$\cal P^\ell(\lambda,r)=\cal P^\ell_{\rm d}(\lambda,r)\bigsqcup\{\mu\in\cal P^\ell_\tau(\lambda,r): \mu_1=\ell\}.$$
   We have a bijection of sets$$\{\mu\in\cal P^\ell_\tau(\lambda,r):  \mu_1=\ell\}\to  \{\nu\in\cal P^{\ell-1}(\lambda):\nu_1\le \ell+1-r\},$$ given by $$\mu\to\mu\setminus\{1\}\ \ \ \ \ \nu\to \tau_\ell(\nu:1),$$ hence  by using \eqref{inv} we see that $$\#\{\mu\in\cal P^\ell_\tau(\lambda,r):\mu_\ell=\ell\}=\#\{\nu\in\cal P^{\ell-1}(\lambda):\nu_1\le \ell+1-r\}=e_{\ell-1,r-1},$$ which proves \eqref{recurs1}.

\subsection{} The initial conditions \eqref{ind4} and \eqref{ind5} are clearly immediate consequences of the following.
\begin{lem}
\begin{gather}\label{ind1}\mu\in\cal P^\ell(\lambda)\implies \mu_\ell\le \ell-k+1,\ \   \ \  \mu_1\ge k,\\
\label{ind2} \{\mu\in\cal P^\ell(\lambda):\mu_\ell=\ell-k+1\}\subset\{\mu\in\cal P^\ell(\lambda):\mu_1=\ell\},\\
\label{ind3} \# \{\mu\in\cal P^\ell(\lambda):\mu_\ell=\ell-k+1\} =\#\cal P^k(\lambda).\end{gather}\end{lem}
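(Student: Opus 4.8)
The plan is to prove \eqref{ind1}, \eqref{ind2} and \eqref{ind3} simultaneously by induction on $\ell\ge k$, with base case $\ell=k$. For $\ell=k$ one has $\cal P^k(\lambda)=\{\lambda,\tau_k\lambda\}$, and since $\lambda\in\cal P^k_{\sq}$ forces $\lambda_1=k$ and $\lambda_k=1$, we get $(\tau_k\lambda)_1=k+1-\lambda_k=k$ and $(\tau_k\lambda)_k=k+1-\lambda_1=1$; hence \emph{every} element of $\cal P^k(\lambda)$ has first part $k$ and last part $1=k-k+1$. This makes \eqref{ind1} clear and gives $\{\mu\in\cal P^k(\lambda):\mu_k=k-k+1\}=\cal P^k(\lambda)=\{\mu\in\cal P^k(\lambda):\mu_1=k\}$, whence \eqref{ind2} and \eqref{ind3}.

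For the inductive step I would fix $\ell\ge k+1$, assume all three statements at level $\ell-1$, and invoke \corref{disjt} to write $\cal P^\ell(\lambda)=\cal P^\ell_{\rm d}(\lambda)\sqcup\{\mu\in\cal P^\ell_\tau(\lambda):\mu_1=\ell\}$, treating the two pieces separately. If $\mu\in\cal P^\ell_{\rm d}(\lambda)$ then $\mu\setminus\{\mu_\ell\}\in\cal P^{\ell-1}(\lambda)$, so \eqref{ind1} at level $\ell-1$ gives $\mu_1\ge k$ and $\mu_\ell\le\mu_{\ell-1}\le(\ell-1)-k+1=\ell-k$, i.e. $\mu_\ell<\ell-k+1$. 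If $\mu\in\cal P^\ell_\tau(\lambda)$ with $\mu_1=\ell$, then $\mu_1=\ell\ge k$, and by definition $\tau_\ell\mu\setminus\{1\}\in\cal P^{\ell-1}(\lambda)$; since the leading part of $\tau_\ell\mu$ (hence of $\tau_\ell\mu\setminus\{1\}$) is $\ell+1-\mu_\ell$, \eqref{ind1} at level $\ell-1$ yields $\ell+1-\mu_\ell\ge k$, i.e. $\mu_\ell\le\ell-k+1$. This establishes \eqref{ind1}, and \eqref{ind2} then drops out immediately: an element with $\mu_\ell=\ell-k+1$ cannot lie in $\cal P^\ell_{\rm d}(\lambda)$ by the strict inequality just obtained, so it lies in $\{\mu\in\cal P^\ell_\tau(\lambda):\mu_1=\ell\}$ and in particular has $\mu_1=\ell$.

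For \eqref{ind3} I would use the bijection $\{\mu\in\cal P^\ell_\tau(\lambda):\mu_1=\ell\}\to\cal P^{\ell-1}(\lambda)$ given by $\mu\mapsto\tau_\ell\mu\setminus\{1\}$ with inverse $\nu\mapsto\tau_\ell(\nu:1)$ (the same correspondence already used to prove \eqref{recurs1}). Under it the image $\nu$ of $\mu$ has $\nu_1=(\tau_\ell\mu)_1=\ell+1-\mu_\ell$, so $\mu_\ell=\ell-k+1$ if and only if $\nu_1=k$; combined with the identification $\{\mu\in\cal P^\ell(\lambda):\mu_\ell=\ell-k+1\}=\{\mu\in\cal P^\ell_\tau(\lambda):\mu_1=\ell,\ \mu_\ell=\ell-k+1\}$ from \eqref{ind2}, the count we want equals $\#\{\nu\in\cal P^{\ell-1}(\lambda):\nu_1=k\}$. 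By \eqref{ind1} at level $\ell-1$ one always has $\nu_1\ge k$, so this is $\#\{\nu\in\cal P^{\ell-1}(\lambda):\nu_1\le k\}$; applying the involution $\tau_{\ell-1}$ (equivalently \eqref{inv}) turns it into $\#\{\rho\in\cal P^{\ell-1}(\lambda):\rho_{\ell-1}\ge\ell-k\}$, and since $\ell-k=(\ell-1)-k+1$, \eqref{ind1} at level $\ell-1$ forces $\rho_{\ell-1}=(\ell-1)-k+1$ for each such $\rho$. Hence by \eqref{ind3} at level $\ell-1$ this equals $\#\cal P^k(\lambda)$, closing the induction.

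The routine bookkeeping aside, the step that needs genuine care — and which I would flag as the main obstacle — is threading the duality $\tau$ correctly across adjacent levels: one must verify that the non-$\mathrm d$ part $\{\mu\in\cal P^\ell_\tau(\lambda):\mu_1=\ell\}$ of $\cal P^\ell(\lambda)$ corresponds, via $\tau_\ell$, to \emph{all} of $\cal P^{\ell-1}(\lambda)$, and that the extremal condition ``$\mu_\ell$ maximal at level $\ell$'' matches, after applying $\tau_\ell$ and then $\tau_{\ell-1}$, the condition ``$\rho_{\ell-1}$ maximal at level $\ell-1$'', so that the inductive hypothesis \eqref{ind3} is applicable. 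Everything else is either definitional or a direct appeal to \corref{disjt} and \lemref{min}.
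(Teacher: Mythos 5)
Your proposal is correct and follows essentially the same route as the paper: induction over $\ell$, with \eqref{ind1} handled by the two cases $\cal P^\ell_{\rm d}$ and $\cal P^\ell_\tau$, \eqref{ind2} obtained by excluding the $\rm d$-piece via the strict bound $\mu_\ell\le\ell-k$, and \eqref{ind3} via the bijection $\mu\mapsto\tau_{\ell-1}(\tau_\ell\mu\setminus\{1\})$ onto the extremal set at level $\ell-1$ (which you merely present in two stages, counting $\{\nu:\nu_1=k\}$ and then applying $\tau_{\ell-1}$). The only cosmetic differences are that you invoke the Corollary where the paper invokes the Proposition for \eqref{ind2}, and you phrase the whole argument as one simultaneous induction.
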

\begin{pf} To prove \eqref{ind1} we proceed by induction on $\ell-k$. If $\ell=k$, then the result holds since $\lambda\in\cal P_{\sq}^k$ and by the definition of $\cal P^k(\lambda)$. Assume we have proved \eqref{ind1} for $\ell-k<s$ and let $\mu\in\cal P^{k+s}(\lambda)$.
If $\mu\in\cal P_{\rm d}^{k+s}(\lambda)$ (resp. $\mu\in\cal P^{k+s}_\tau(\lambda)$), then we have $$\mu_{k+s}\le \mu_{k+s-1}\le s\ \  ({\rm resp.}\ \ k+s+1-\mu_{k+s}\ge k,\ \ {\rm i.e,} \ \ \mu_{k+s}\le s+1) ,$$  and the inductive step is proved.

To prove \eqref{ind2} notice that it is obviously true if $\ell=k$. If $\ell>k$ and
 $$\mu_{\ell}=\ell-k +1\ge 2,\ \ \ \ \mu_1<\ell,$$ then Proposition \ref{disjt} applies and we get  $\mu\in\cal P_{\rm d}^{\ell}(\lambda)$. Applying \eqref{ind1} to $\mu\setminus\{\mu_\ell\}$ gives  $\mu_\ell\le \mu_{\ell-1}\le \ell-k$ which contradicts our assumption.

Suppose that $\mu\in\cal P^\ell(\lambda)$ is such that $\mu_\ell=\ell-k+1$. Using \eqref{ind2} we see that $\mu_1=\ell$. In particular, $\mu\notin\cal P_{\rm d}(\lambda)$, forcing $$\tau_\ell\mu\setminus\{1\}=(k\ge\cdots\ge \ell+1-\mu_2)\in\cal P^{\ell-1}(\lambda),$$  and hence we get $$\tau_{\ell-1}(\tau_\ell\mu\setminus\{1\})=(\mu_2-1\ge\cdots\ge\mu_\ell-1=\ell-k)\in\cal P^{\ell-1}(\lambda).$$ In other words the assignment
 $\mu\to \tau_{\ell-1}(\tau_\ell\mu\setminus\{1\})$ defines a bijection $\{\mu\in\cal P^\ell(\lambda):\mu_\ell=\ell-k+1\}\to  \{\mu\in\cal P^{\ell-1}(\lambda):\mu_\ell=\ell-k\}$ and hence \eqref{ind3} follows.

\end{pf}

\subsection{} To prove part (ii) of the theorem, assume  that  $\nu\notin\{\lambda, \tau_k\lambda\}$ and without loss of generality that $k\ge s$. To see that $\lambda\notin\cal P^{k}(\nu)$, notice that $\lambda\notin\cal P_{\rm d}^k(\nu)$ since $\lambda_1=k$ and by Lemma \ref{min} we also have $\tau_k\lambda\notin\cal P_\tau^k(\nu)$ since $\lambda_k=1$.

Suppose that $\mu\in\cal P^\ell(\lambda)\cap \cal P^\ell(\nu)$ for some $\ell>\max(s,k)$. If $2\le\mu_\ell\le\mu_1\le \ell-1$, then it follows from Proposition \ref{disjt} that $\mu\in\cal P^\ell_{\rm d}(\lambda)\cap \cal P^\ell_{\rm d}(\nu)$, i.e., that $$\mu\setminus\{\mu_\ell\}\in\cal P^{\ell-1}(\lambda)\cap\cal P^{\ell-1}(\nu),$$ which contradicts the induction hypothesis. If $\mu_1=\ell$, then $\mu\notin\cal P_{\rm d}^\ell(\lambda)\cup\cal P^\ell_{\rm d}(\nu)$ and so we must have that $$\tau_\ell\mu\in\cal P^\ell_{\rm d}(\lambda)\cap \cal P^\ell_{\rm d}(\nu).$$ But this implies that$$\tau_\ell\mu\setminus\{1\}\in\cal P^{\ell-1}_{\rm d}(\lambda)\cap \cal P^{\ell-1}_{\rm d}(\nu)$$ which is again impossible. The final case to consider is when $\mu_\ell=1$ and this is now immediate by applying $\tau_\ell$ to the previous case.

\subsection{} The following proposition proves part (iii) of the Theorem.
\begin{prop} \begin{enumerit}
\item[(i)] Let  $\lambda\in\cal P^k$, $\nu\in\cal P^s$ and $\mu\in\cal P^\ell$ with $k\le s\le \ell$. Then
$$\mu\in\cal P^\ell(\nu),\ \ \nu\in\cal P^s(\lambda)\implies \mu\in\cal P^\ell(\lambda).$$
\item[(ii)] Let $\mu\in\cal P^{\ell}$ for some $\ell\in\bn$. Then $\mu\in\cal P^\ell(\lambda)$ for some $\lambda\in\cal P^k_{\rm sq}$, $k\ge 1$. \end{enumerit}
\end{prop}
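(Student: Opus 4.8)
The plan is to prove both statements by induction on $\ell$, exploiting the recursive structure $\cal P^\ell(\lambda)=\cal P^\ell_{\rm d}(\lambda)\sqcup\{\mu\in\cal P^\ell_\tau(\lambda):\mu_1=\ell\}$ from Corollary~\ref{disjt} together with the ``descendant'' decomposition \eqref{desd}, namely $\cal P^{\ell+1}(\lambda)=\bigsqcup_{\nu\in\cal P^\ell(\lambda)}\bod(\nu)$. For part (i), I would induct on $\ell-s$. When $\ell=s$ there is nothing to prove. For the inductive step, write $\mu\in\cal P^\ell(\nu)=\bigsqcup_{\rho\in\cal P^{\ell-1}(\nu)}\bod(\rho)$, so $\mu\in\bod(\rho)$ for a unique $\rho\in\cal P^{\ell-1}(\nu)$; by the induction hypothesis $\rho\in\cal P^{\ell-1}(\lambda)$, and since $\bod(\rho)\subset\cal P^\ell(\lambda)$ by \eqref{desd}, we conclude $\mu\in\cal P^\ell(\lambda)$. (One subtlety: \eqref{desd} is stated for $\lambda\in\cal P^k_{\rm sq}$; for general $\lambda\in\cal P^k$ I would either note the same proof works, or observe that it suffices to handle the square case because of how part (ii) feeds into (iii) — but cleanly, \eqref{desd} really only used Corollary~\ref{disjt}, whose hypotheses should be tracked.)

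For part (ii), the strategy is to induct on $\ell$ and, given $\mu\in\cal P^\ell$, to peel off one level so as to land in $\cal P^{\ell-1}$, apply the inductive hypothesis, and then climb back up using part (i). Concretely: if $\ell=1$ then $\mu=(1)\in\cal P^1_{\rm sq}$ and we are done. If $\ell>1$, I distinguish cases according to whether $\mu_\ell=1$, $\mu_1=\ell$, or $2\le\mu_\ell\le\mu_1\le\ell-1$ (and note $\mu$ cannot simultaneously have $\mu_\ell\geq 2$ and fail $\mu_1=\ell$ unless it is ``interior''). In the interior case and the case $\mu_\ell=1$, the partition $\mu\setminus\{\mu_\ell\}$ lies in $\cal P^{\ell-1}$, so by induction $\mu\setminus\{\mu_\ell\}\in\cal P^{\ell-1}(\lambda)$ for some $\lambda\in\cal P^k_{\rm sq}$; then $\mu=(\,(\mu\setminus\{\mu_\ell\})\,:\mu_\ell)\in\bod(\mu\setminus\{\mu_\ell\})\subset\cal P^\ell(\lambda)$ by \eqref{desd}. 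In the case $\mu_1=\ell$ with $\mu_\ell\geq 2$, apply $\tau_\ell$: then $\tau_\ell\mu$ has $(\tau_\ell\mu)_\ell=\ell+1-\mu_1=1$, so the previous case gives $\tau_\ell\mu\in\cal P^\ell(\lambda)$ for some square $\lambda$, and since $\cal P^\ell(\lambda)$ is $\tau_\ell$-stable we get $\mu\in\cal P^\ell(\lambda)$ as well.

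The main obstacle — and the place where part~(i) is genuinely needed rather than just \eqref{desd} — is making sure the peeling-off actually terminates at a \emph{square} partition, i.e. one with $\lambda_1=k$ and $\lambda_k=1$, and that the base of the induction is set up correctly. When $\mu$ is already square there is nothing to do; the point of the induction is that the recursive constructions $(\,\cdot\,:j)$ and $\tau_{\ell+1}(\,\cdot\,:1)$ only ever enlarge the ``reach'' of a square seed, and part~(i) guarantees transitivity so that no matter how many times one peels, the element found for $\mu\setminus\{\mu_\ell\}$ propagates correctly to $\mu$. I would also record, as the induction naturally produces it, the stronger bookkeeping claim hinted at in the Remark: the square partition $\lambda$ with $\mu\in\cal P^\ell(\lambda)$ is \emph{uniquely} determined by $\mu$ (up to $\tau_k$), which follows by combining surjectivity just proved with the disjointness in part~(ii) of Theorem~1 — this is what ultimately upgrades ``for some $\lambda$'' to the disjoint-union statement of Theorem~1(iii).
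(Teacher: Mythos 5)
Your proof of part (ii) is correct and, if anything, cleaner than it needs to be: by peeling off only \emph{one} part at a time (strip $\mu_\ell$ when $\mu_1\le\ell-1$, and apply $\tau_\ell$ first when $\mu_1=\ell$ and $\mu_\ell\ge 2$), the step $\mu\setminus\{\mu_\ell\}\in\cal P^{\ell-1}(\lambda)\implies\mu\in\cal P^\ell_{\rm d}(\lambda)$ is immediate from the definition, so you never actually need part (i) or the full strength of \eqref{desd} there -- only the trivial containment $\bod(\rho)\subset\cal P^\ell(\lambda)$.

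The genuine gap is in part (i). You build the inductive step on \eqref{desd}, i.e.\ on the decomposition $\cal P^\ell(\nu)=\bigsqcup_{\rho\in\cal P^{\ell-1}(\nu)}\bod(\rho)$, and you flag but do not resolve the fact that this is only established for square seeds. The problem is not bookkeeping: the decomposition is \emph{false} for general $\nu\in\cal P^s$, and so is Corollary~\ref{disjt} on which it rests. Take $\nu=(2\ge 2\ge 2)\in\cal P^3$, so $\tau_3\nu=\nu$ and $\cal P^3(\nu)=\{\nu\}$. Then $\cal P^4_{\rm d}(\nu)=\{(2,2,2,1),(2,2,2,2)\}$ and $\cal P^4_\tau(\nu)=\{(4,3,3,3),(3,3,3,3)\}$, while $\bod(\nu)=\{(2,2,2,1),(2,2,2,2),(4,3,3,3)\}$. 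The element $(3,3,3,3)=\tau_4(2,2,2,2)$ lies in $\cal P^4(\nu)$ but in no $\bod(\rho)$ with $\rho\in\cal P^3(\nu)$; equivalently, it has first part $3\ne 4$ yet is not in $\cal P^4_{\rm d}(\nu)$, contradicting the conclusion of Corollary~\ref{disjt} for this seed. (The proof of the Proposition preceding that Corollary uses $\lambda_k=1$ and $(\tau_k\lambda)_1=k$ in its base case, which is exactly where squareness enters.) Consequently your induction simply never sees such a $\mu$, and the step fails -- even though the \emph{conclusion} of (i) is still true there, since $(2,2,2)\in\cal P^3((1))$ and $(3,3,3,3)\in\cal P^4((1))$.

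The repair is to drop \eqref{desd} and argue directly from the definition of $\cal P^\ell(\nu)$ as the (not necessarily disjoint) union $\cal P^\ell_{\rm d}(\nu)\cup\cal P^\ell_\tau(\nu)$, which is what the paper does: if $\mu\in\cal P^\ell_{\rm d}(\nu)$ then $\mu\setminus\{\mu_\ell\}\in\cal P^{\ell-1}(\nu)$, hence by the inductive hypothesis lies in $\cal P^{\ell-1}(\lambda)$, whence $\mu\in\cal P^\ell_{\rm d}(\lambda)$; if $\mu\in\cal P^\ell_\tau(\nu)$, run the same argument on $\tau_\ell\mu\setminus\{\ell+1-\mu_1\}$ to get $\mu\in\cal P^\ell_\tau(\lambda)$. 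This needs no squareness hypothesis on $\nu$ or $\lambda$. (Incidentally, your one-step version of (ii) also quietly sidesteps a defect in the paper's own proof of (ii), where the truncation $\nu=(\mu_1\ge\cdots\ge\mu_s)$ with $s=\ell-\mu_1+1$ need not lie in $\cal P^s$ when $\mu_1>s$; so your route through (ii) is the right one, provided (i) is proved as above.)
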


\begin{pf} We proceed by induction on $\ell-s$. If $\ell=s$, then we have $\mu=\nu$ or $\mu=\tau_s\nu$ and the statement follows since $\cal P^s(\lambda)$ is $\tau_s$--stable.  If $\ell>s$ and $\mu\in \cal P_{\rm d}^\ell(\nu)$, then by the induction hypothesis, we have $\mu\setminus\{\mu_\ell\}\in \cal P^{\ell-1}(\lambda)$ and hence by definition, $\mu\in\cal P^\ell(\lambda)$. Otherwise we have $\tau_\ell\mu\setminus\{\ell-\mu_1+1\}\in\cal P^{\ell-1}(\lambda)$ and hence $\tau_\ell\in\cal P^\ell(\lambda)$. Part (i) follows by using the fact that $\cal P^\ell(\lambda)$ is $\tau_\ell$-stable.

To prove (ii),  we  proceed by induction on $\ell$. If $\ell=1$, then $$\cal P^1=\cal P^1_{\rm sq}=\{1\},$$ and we are done.  Assume now that we have proved the result for all integers less than $\ell$ and let $\mu\in\cal P^{\ell}$.
If $\mu_1=\ell$ and $\mu_\ell=1$, there is nothing to prove.   If $\mu_1<\ell$, then set $$ s=\ell-\mu_1+1,\ \ \nu=\mu_1\ge\cdots\ge\mu_s.$$ Clearly $\mu\in\cal P^\ell(\nu)$ and since $\nu\in\cal P^s$ with $s<\ell$ we see by the induction hypothesis that $\nu\in\cal P^s(\lambda)$ for some $\lambda\in\cal P_{\rm sq}^k$. Applying part (i) of the proposition shows that $\mu\in\cal P^\ell(\lambda)$.
Finally, consider the case $\mu_1=\ell$ and  $\mu_1>1$. Then we have   $\nu =\tau_\ell\mu\in\cal P^{\ell,\ell-1}$ and we are now in the previous case and so $$\nu=\tau_\ell\mu\in\cal P^\ell(\lambda),$$ for some $\lambda\in\cal P_{\rm sq}^k$. The result again follows since  $\cal P^\ell(\lambda)$ is $\tau_\ell$--stable.
\end{pf}
We illustrate the basic idea in the proof of the preceding proposition in the following two simple examples. Take $\mu = 3\ge3\ge3\ge2\ge2\ge1$. Then

\setlength{\unitlength}{5mm}
\begin{picture}(10,6)

 \put(2,5.4){$\mu$}
 \multiput(1,1)(0,1){5}{\line(1,0){2}}
 \multiput(1,1)(1,0){3}{\line(0,1){4}}
 \multiput(1,0)(1,0){2}{\line(0,1){1}}
 \put(1,0){\line(1,0){1}}
 \multiput(3,3)(0,1){3}{\line(1,0){1}}
 \put(4,3){\line(0,1){2}}

 \put(4.5,4){\vector(1,0){2}}

 \put(8,5.4){$\mu^1$}
 \multiput(7,2)(1,0){5}{\line(0,1){3}}
 \multiput(7,2)(0,1){4}{\line(1,0){4}}
 \multiput(7,0)(1,0){2}{\line(0,1){2}}
 \multiput(7,0)(0,1){2}{\line(1,0){1}}
 \put(8,1){\line(1,0){1}}
 \put(9,1){\line(0,1){1}}
 \put(7,0){\line(1,1){2}}
 \put(7,1){\line(1,1){1}}
 \multiput(7,2)(1,0){2}{\line(1,-1){1}}
 \put(7,1){\line(1,-1){1}}

 \put(12.3,4.2){$\tau$}
 \put(11.5,4){\vector(1,0){2}}

 \put(14,5.4){$\tau (\mu^1)$}
 \multiput(14,2)(1,0){2}{\line(0,1){3}}
 \multiput(14,2)(0,1){4}{\line(1,0){1}}
 \multiput(15,4)(0,1){2}{\line(1,0){1}}
 \put(16,4){\line(0,1){1}}

 \put(16.5,4){\vector(1,0){2}}

 \put(20,5.4){$\mu^2$}
 \multiput(19,3)(1,0){4}{\line(0,1){2}}
 \multiput(19,3)(0,1){3}{\line(1,0){3}}
 \multiput(19,2)(1,0){2}{\line(0,1){1}}
 \put(19,2){\line(1,0){1}}
 \put(19,2){\line(1,1){1}}
 \put(19,3){\line(1,-1){1}}

 \put(23.3,4.2){$\tau$}
 \put(22.5,4){\vector(1,0){2}}

 \put(25,5.4){$\tau (\mu^2)$}
 \multiput(25,3)(1,0){2}{\line(0,1){2}}
 \multiput(25,3)(0,1){3}{\line(1,0){1}}
 \multiput(26,4)(0,1){2}{\line(1,0){1}}
 \put(27,4){\line(0,1){1}}

 \linethickness{.5mm}

 \multiput(7,5)(4,0){2}{\line(0,-1){3}}
 \multiput(7,5)(0,-3){2}{\line(1,0){4}}
 \put(9,2){\line(0,1){1}}
 \put(9,3){\line(1,0){1}}
 \put(10,3){\line(0,1){2}}

 \multiput(19,5)(3,0){2}{\line(0,-1){2}}
 \multiput(19,5)(0,-2){2}{\line(1,0){3}}
 \multiput(20,3)(1,1){2}{\line(0,1){1}}
 \put(20,4){\line(1,0){1}}

\end{picture}
\vskip 24pt
So we have $\mu\in\cal P^6(2\geq1)$.
\vskip 24pt

\noindent If  $\mu =(7\ge  6\ge 5\ge 3\ge 3\ge 3\ge 3\ge 3\ge 3\ge  1)$, then

\setlength{\unitlength}{5mm}
\begin{picture}(10,10)

 \multiput(1,9)(1,0){9}{\line(0,-1){7}}
 \multiput(1,9)(0,-1){8}{\line(1,0){8}}
 \multiput(1,2)(1,0){4}{\line(0,-1){2}}
 \multiput(1,0)(0,1){2}{\line(1,0){3}}
 \multiput(1,0)(1,0){2}{\line(0,-1){1}}
 \put(1,-1){\line(1,0){1}}
 \put(1,-1){\line(1,1){3}}
 \put(1,0){\line(1,1){2}}
 \put(1,1){\line(1,1){1}}
 \put(3,0){\line(1,1){1}}
 \put(1,2){\line(1,-1){2}}
 \put(2,2){\line(1,-1){2}}
 \put(3,2){\line(1,-1){1}}
 \put(1,1){\line(1,-1){1}}
 \put(1,0){\line(1,-1){1}}

 \multiput(13,9)(1,0){7}{\line(0,-1){5}}
 \multiput(13,9)(0,-1){6}{\line(1,0){6}}
 \multiput(13,4)(1,0){3}{\line(0,-1){1}}
 \put(13,3){\line(1,0){2}}
 \multiput(13,3)(1,0){2}{\line(0,-1){1}}
 \put(13,2){\line(1,0){1}}
 \put(13,2){\line(1,1){2}}
 \put(13,3){\line(1,1){1}}
 \put(13,3){\line(1,-1){1}}
 \put(13,4){\line(1,-1){1}}
 \put(14,4){\line(1,-1){1}}

 \multiput(23,9)(1,0){5}{\line(0,-1){3}}
 \multiput(23,9)(0,-1){4}{\line(1,0){4}}
 \multiput(23,6)(1,0){2}{\line(0,-1){2}}
 \multiput(23,5)(0,-1){2}{\line(1,0){1}}
 \multiput(23,5)(0,-1){2}{\line(1,1){1}}
 \multiput(23,6)(0,-1){2}{\line(1,-1){1}}

 \put(10,7){\vector(1,0){2}}
 \put(20,7){\vector(1,0){2}}

 \linethickness{.5mm}

 \multiput(1,9)(8,0){2}{\line(0,-1){7}}
 \multiput(1,9)(0,-7){2}{\line(1,0){8}}
 \put(4,2){\line(0,1){4}}
 \put(4,6){\line(1,0){2}}
 \put(6,6){\line(0,1){1}}
 \put(6,7){\line(1,0){1}}
 \put(7,7){\line(0,1){1}}
 \put(7,8){\line(1,0){1}}
 \put(8,8){\line(0,1){1}}

 \multiput(13,9)(6,0){2}{\line(0,-1){5}}
 \multiput(13,9)(0,-5){2}{\line(1,0){6}}
 \put(16,4){\line(0,1){1}}
 \put(16,5){\line(1,0){2}}
 \put(18,5){\line(0,1){4}}

 \multiput(23,9)(4,0){2}{\line(0,-1){3}}
 \multiput(23,9)(0,-3){2}{\line(1,0){4}}
 \put(24,6){\line(0,1){2}}
 \put(24,8){\line(1,0){2}}
 \put(26,8){\line(0,1){1}}

\end{picture}
\vskip 24pt
So we have $\mu\in \cal P^{10}(3\geq 1\geq 1)$.
\vskip 24pt

\section{From the Catalan numbers to the Ballot numbers} In this section, we generalize the first part of Theorem 1. Namely, given $m\in\bn$, we modify the algorithm defined in Section 1 so that if we start with  a suitable set of $m$ elements, then applying the algorithm $\ell$ times gives a set of cardinality equal to the ballot number  $b_{\ell,m}$.  We use the binomial identity \begin{equation}\label{ballotr} \binom{r}{s}=\binom{r-1}{s}+\binom{r-1}{s-1},$$ freely and without comment  throughout the rest of the section. Note that in particular, this gives $$b_{\ell,m}=b_{\ell,m-1}+b_{\ell-1,m+1}.\end{equation}
\subsection{} Fix $m\in\bn$,   and let $$\Omega_m=\{\{j\}: 1\le j\le m\}\subset \cal P^{1,m}.$$ We generalize the definition of the sets $\cal P^\ell(\lambda)$ given in Section 1 as follows. Define subsets $\cal P^{\ell}(\Omega_m)$ by,\begin{gather*}\cal P_{\rm d}^1(\Omega_m)=\Omega_m=\cal P^1_{\tau}(\Omega_m)=\tau_m\Omega_m,\\ \cal P^\ell_{\rm d}(\Omega_m)=\{(\mu:j)\in\cal P^{\ell,\ell+m-1}: 1\le j\le \mu_{\ell},\ \ \mu\in\cal P^{\ell-1}(\Omega_m)\},\\ \cal P^\ell_\tau(\Omega_m)=\tau_{\ell+m-1}\cal P^\ell_{\rm d}(\Omega_m),\\ \cal P^\ell(\Omega_m)=\cal P^\ell_{\rm d}(\Omega_m)\cup\cal P^\ell_\tau(\Omega_m). \end{gather*}

 The main result of this section is:
 \begin{thm} For $\ell,m\in\bn$, we have  $$\#\cal P^\ell(\Omega_m)=b_{\ell,m-1}.$$
 \end{thm}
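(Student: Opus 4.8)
The plan is to mimic the proof of Theorem 1(i): introduce the refined counting function $e_{\ell,r}(\Omega_m)=\#\{\mu\in\cal P^\ell(\Omega_m):\mu_\ell\ge r\}$, show it satisfies a recurrence together with suitable initial and vanishing conditions, and then verify that the closed form built from ballot numbers solves that system. First I would record, by exactly the argument of the Proposition following Corollary~\ref{disjt} in the $\tau_k$-setting (now with $\tau_{\ell+m-1}$ in place of $\tau_\ell$), that $\cal P^\ell_{\rm d}(\Omega_m)$ and $\cal P^\ell_\tau(\Omega_m)$ overlap exactly in the set $\{\mu\in\cal P^\ell(\Omega_m):\ell+m-2\ge\mu_1\ge\mu_\ell\ge 2\}$, so that the analogue of Corollary~\ref{disjt} holds: $\cal P^\ell(\Omega_m)=\cal P^\ell_{\rm d}(\Omega_m)\sqcup\{\mu\in\cal P^\ell_\tau(\Omega_m):\mu_1=\ell+m-1\}$. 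From the descendant description of $\cal P^\ell_{\rm d}(\Omega_m)$ we get $\#\cal P^\ell_{\rm d}(\Omega_m,r)=\sum_{s\ge r}e_{\ell-1,s}(\Omega_m)$, and via the bijection $\mu\mapsto\mu\setminus\{1\}$ (inverse $\nu\mapsto\tau_{\ell+m-1}(\nu:1)$) together with the involution identity $e_{\ell,r}(\Omega_m)=\#\{\mu\in\cal P^\ell(\Omega_m):\mu_1\le\ell+m-r\}$, the extra piece contributes $e_{\ell-1,r-1}(\Omega_m)$. This yields the recurrence $e_{\ell,r}(\Omega_m)=e_{\ell-1,r-1}(\Omega_m)+e_{\ell,r+1}(\Omega_m)=\sum_{s\ge r-1}e_{\ell-1,s}(\Omega_m)$, identical in form to \eqref{recurs1}.

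Next I would pin down the boundary data. The analogue of the Lemma proving \eqref{ind1}--\eqref{ind3} should give, by induction on $\ell$: every $\mu\in\cal P^\ell(\Omega_m)$ satisfies $\mu_\ell\le\ell$ and $\mu_1\ge m$ (the chain $\mu_\ell\le\mu_{\ell-1}\le\ell-1$ on the d-side, and $\ell+m-1+1-\mu_\ell\ge m$ on the $\tau$-side forcing $\mu_\ell\le\ell$); that $\{\mu:\mu_\ell=\ell\}\subset\{\mu:\mu_1=\ell+m-1\}$ by the overlap proposition; and that $\mu\mapsto\tau_{\ell-1}(\tau_{\ell+m-1}\mu\setminus\{1\})$ maps $\{\mu\in\cal P^\ell(\Omega_m):\mu_\ell=\ell\}$ bijectively onto $\{\mu\in\cal P^{\ell-1}(\Omega_m):\mu_\ell=\ell-1\}$, so that $\#\{\mu\in\cal P^\ell(\Omega_m):\mu_\ell=\ell\}=\#\cal P^1(\Omega_m)=\#\Omega_m=m$. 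Hence $e_{\ell,r}(\Omega_m)=0$ for $r>\ell$, and $e_{\ell,\ell}(\Omega_m)=m$.

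Finally I would check that $e_{\ell,r}(\Omega_m):=b_{\ell-r,\,r+m-1}$ solves this system. The vanishing $e_{\ell,r}=0$ for $r>\ell$ is forced by the fact that $b_{a,b}=0$ for $a<0$ except that one must be slightly careful at $r=\ell$: with the convention $b_{0,b}=1$ we get $e_{\ell,\ell}(\Omega_m)=b_{0,\ell+m-1}=1$, which is wrong — so instead the right guess is $e_{\ell,r}(\Omega_m)=b_{\ell-r,\,r+m-1}+\cdots$ adjusted so the base value is $m$; concretely one wants the two-parameter ballot recurrence $b_{\ell,m}=b_{\ell,m-1}+b_{\ell-1,m+1}$ from the displayed identity to match $e_{\ell,r}=e_{\ell-1,r-1}+e_{\ell,r+1}$, which it does under the substitution $e_{\ell,r}(\Omega_m)=b_{\ell-r,\,r+m-2}$: then $e_{\ell-1,r-1}=b_{\ell-r,\,r+m-3}$, $e_{\ell,r+1}=b_{\ell-r-1,\,r+m-1}$, and $b_{\ell-r,r+m-2}=b_{\ell-r,r+m-3}+b_{\ell-r-1,r+m-1}$ is precisely \eqref{ballotr} in the form $b_{a,c}=b_{a,c-1}+b_{a-1,c+1}$. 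One then checks the initial condition $e_{\ell,\ell}(\Omega_m)=b_{0,m-2}$; since $b_{0,m-2}=\binom{m-2}{0}-\binom{m-2}{-1}=1$ this still does not give $m$, which tells me the correct normalization is $e_{\ell,r}(\Omega_m)=\sum_{j} (\text{something})$ — i.e.\ the honest computation is: sum the recurrence down to the base level $\ell-r$ steps, giving $e_{\ell,r}(\Omega_m)=\sum_{t\ge r}\#\{\mu\in\cal P^{?}\}$, and identify the resulting alternating/partial sum with $b_{\ell,m-1}$ via the alternating ballot identity the authors announce proving in this section. Concretely, iterating $e_{\ell,r}=\sum_{s\ge r-1}e_{\ell-1,s}$ down to $\ell=r$ expresses $e_{\ell,1}(\Omega_m)$ as a nested sum of the base values $e_{r',r'}(\Omega_m)=m$, and the combinatorial identity $\sum(\text{paths}) = b_{\ell,m-1}$ closes the argument; setting $r=1$ gives $\#\cal P^\ell(\Omega_m)=e_{\ell,1}(\Omega_m)=b_{\ell,m-1}$.

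The main obstacle I anticipate is getting the index bookkeeping in the ballot numbers exactly right — the rectangle width is now $\ell+m-1$ rather than $\ell$, so the "self-dual" phenomenon of Theorem~1 is replaced by a genuinely $m$-fold structure, and the base value $m$ (rather than $1$ or $2$) must be threaded correctly through the recurrence; this is where the announced alternating identity for ballot numbers (generalizing Aigner's for Catalan numbers) does the real work, and verifying that it is exactly the identity produced by unwinding the recurrence is the crux. Everything else is a routine transcription of the Section~2 arguments with $\tau_\ell\rightsquigarrow\tau_{\ell+m-1}$ and $\cal P^k_{\rm sq}\rightsquigarrow\Omega_m$.
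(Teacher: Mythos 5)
Your overall strategy --- introduce $e_{\ell,r}(\Omega_m)$, derive the recurrence $e_{\ell,r}=e_{\ell-1,r-1}+e_{\ell,r+1}$ from the disjoint-union decomposition, and solve with boundary data --- is exactly the paper's strategy, and your derivation of the recurrence itself is sound. But your boundary conditions are wrong, and the error is fatal to the rest. You claim every $\mu\in\cal P^\ell(\Omega_m)$ satisfies $\mu_\ell\le\ell$ and $\mu_1\ge m$, hence $e_{\ell,r}=0$ for $r>\ell$ and $e_{\ell,\ell}=m$. Already at $\ell=1$ this fails: $\cal P^1(\Omega_m)=\Omega_m$ contains every singleton $(j)$ with $1\le j\le m$, so $\mu_1$ ranges over all of $\{1,\dots,m\}$ and your induction never begins. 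Concretely, for $m=2$ one computes $\cal P^2(\Omega_2)=\{(1,1),(2,1),(2,2),(3,2),(3,3)\}$, so $e_{2,2}(\Omega_2)=3\ne 2=m$ and $e_{2,3}(\Omega_2)=1\ne 0$. The correct boundary data (the paper's \eqref{ind42}--\eqref{ind62}) are $e_{\ell,s}=0$ for $s>\ell+m-1$, $e_{\ell,\ell+m-1}=1$ (only the full $\ell\times(\ell+m-1)$ rectangle survives), and the genuinely new ramp condition $e_{1,s}=\max(m-s+1,0)$ along the first row. The $\cal P^k_{\rm sq}$ lemma you try to transcribe does not transfer, precisely because the $m$ starting partitions have varying largest parts, unlike the single square partition with $\lambda_k=1$.

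Second, you never actually exhibit a solution of the system. You try $e_{\ell,r}=b_{\ell-r,\,r+m-1}$ and then $b_{\ell-r,\,r+m-2}$; both satisfy the recurrence but, as you notice yourself, fail the base values --- and the failure is structural: because the $\ell=1$ row is a linear ramp rather than an indicator, no single ballot number can solve the system. The paper's answer is the two-case formula $e_{\ell,s}(\Omega_m)=\binom{m+2\ell-s-1}{\ell}$ for $s\ge\ell$ and $e_{\ell,s}(\Omega_m)=\sum_{j\ge 0}(-1)^j\binom{s-j}{j}b_{\ell-j,m-1}$ for $1\le s\le\ell-1$, and the verification at the interface $s=\ell-1$ is exactly where the alternating identity $\sum_{j\ge 0}(-1)^j\binom{\ell-j}{j}b_{\ell-j,m}=\binom{m+\ell}{\ell}$ is needed; the paper proves that identity by a separate double induction on $(\ell,m)$. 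Gesturing at ``the alternating identity does the real work'' without stating the candidate closed form, checking it against the correct boundary data, and verifying the interface case leaves a genuine hole rather than a bookkeeping issue.
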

\subsection{} The proof of the theorem is very similar to the corresponding result in Section 1. An inspection of Proposition \ref{disjt} and its Corollary shows that the proof works in our more general situation and we have: \begin{prop}\label{disjt2}  For all $\ell\ge 1$, the set $\cal P^\ell(\Omega_m)$ is the disjoint union of the following sets:
 for all $\ell\ge 1$, we have  \begin{eqnarray*}\cal P^\ell(\Omega_m)&= \cal P^\ell_{\rm d}(\Omega_m)\ \ \bigsqcup \ \ \{\mu\in\cal P^\ell_\tau(\Omega_m):\mu_1=\ell\}\\&=\cal P_\tau^\ell(\Omega_m)\ \ \bigsqcup\ \ \{\mu\in\cal P^\ell_{\rm d}(\Omega_m):\mu_\ell=1\}.\end{eqnarray*}
\hfill\qedsymbol \end{prop}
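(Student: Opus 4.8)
The plan is to establish the stated decomposition by transporting the proof of Corollary \ref{disjt} to the present setting. The only structural change is that $\cal P^\ell(\Omega_m)$ lies inside $\cal P^{\ell,\ell+m-1}$, so the relevant involution is $\tau_{\ell+m-1}$ and the largest available first part is $\ell+m-1$; accordingly the condition displayed as $\mu_1=\ell$ is to be read as $\mu_1=\ell+m-1$, the two agreeing precisely when $m=1$, which recovers Corollary \ref{disjt}. First I would isolate the two facts on which the proof of Corollary \ref{disjt} rests and reprove each of them for $\Omega_m$, and then run the deduction unchanged.

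The first fact is the $\Omega_m$-analogue of \lemref{min}: \emph{if $\mu\in\cal P^\ell_{\rm d}(\Omega_m)$ then $\mu_1\le\ell+m-2$, while if $\mu\in\cal P^\ell_\tau(\Omega_m)$ then $\mu_\ell\ge 2$.} Both are read off the definitions, since an element of $\cal P^\ell_{\rm d}(\Omega_m)$ has the form $(\mu:j)$ with $\mu\in\cal P^{\ell-1}(\Omega_m)\subseteq\cal P^{\ell-1,\ell+m-2}$, forcing its first part to be at most $\ell+m-2$, and $\tau_{\ell+m-1}$ converts this into the bound $\mu_\ell\ge 2$ on last parts. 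The second, and essential, fact is the intersection formula
\begin{equation*}
\cal P^\ell_{\rm d}(\Omega_m)\cap\cal P^\ell_\tau(\Omega_m)=\{\mu\in\cal P^\ell(\Omega_m):\ell+m-2\ge\mu_1\ge\mu_\ell\ge 2\},
\end{equation*}
the analogue of the Proposition preceding Corollary \ref{disjt}. The inclusion $\subseteq$ is immediate from the first fact. For $\supseteq$ I would induct on $\ell$, copying the Section~1 computation with every involution index raised by $m-1$. Since the region on the right is stable under $\tau_{\ell+m-1}$, one may assume $\mu\in\cal P^\ell_{\rm d}(\Omega_m)$, and it then suffices to prove $\tau_{\ell+m-1}\mu\in\cal P^\ell_{\rm d}(\Omega_m)$. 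After applying $\tau_{\ell+m-2}$ this membership becomes equivalent to $\mu'=(\mu_2-1\ge\cdots\ge\mu_\ell-1)\in\cal P^{\ell-1}(\Omega_m)$, and I would prove the stronger $\mu'\in\cal P^{\ell-1}_{\rm d}(\Omega_m)$. Split according to whether $\mu\setminus\{\mu_\ell\}$ lies in $\cal P^{\ell-1}_{\rm d}(\Omega_m)$ or in $\cal P^{\ell-1}_\tau(\Omega_m)$: in the first case the first fact forces $\mu_1\le\ell+m-3$, so $\mu\setminus\{\mu_\ell\}$ lies in the level-$(\ell-1)$ region and the inductive hypothesis gives $\tau_{\ell+m-2}(\mu\setminus\{\mu_\ell\})\in\cal P^{\ell-1}_{\rm d}(\Omega_m)$, while in the second case the same membership holds by the definition of $\cal P^{\ell-1}_\tau(\Omega_m)$. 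In either case, removing the last part and applying $\tau_{\ell+m-3}$ produces $\mu'\setminus\{\mu_\ell-1\}\in\cal P^{\ell-2}(\Omega_m)$, that is $\mu'\in\cal P^{\ell-1}_{\rm d}(\Omega_m)$, as needed.

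With both facts in hand the decomposition follows exactly as in the proof of Corollary \ref{disjt}. If $\mu\in\cal P^\ell(\Omega_m)$ but $\mu\notin\cal P^\ell_{\rm d}(\Omega_m)$, then $\mu\in\cal P^\ell_\tau(\Omega_m)$, so $\mu_\ell\ge 2$ by the first fact; since $\mu$ lies outside the intersection but already satisfies $\mu_1\ge\mu_\ell\ge 2$, the only condition of the intersection formula it can fail is $\mu_1\le\ell+m-2$, whence $\mu_1=\ell+m-1$. Because the first fact forces every element of $\cal P^\ell_{\rm d}(\Omega_m)$ to have $\mu_1\le\ell+m-2$, the set $\{\mu\in\cal P^\ell_\tau(\Omega_m):\mu_1=\ell+m-1\}$ is disjoint from $\cal P^\ell_{\rm d}(\Omega_m)$, which gives the first equality. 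Applying the involution $\tau_{\ell+m-1}$, which interchanges $\cal P^\ell_{\rm d}(\Omega_m)$ and $\cal P^\ell_\tau(\Omega_m)$ and sends the condition $\mu_1=\ell+m-1$ to $\mu_\ell=1$, then yields the second equality.

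The main obstacle is the reverse inclusion in the intersection formula: its inductive step carries all the content and demands careful bookkeeping of the shifted involutions $\tau_{\ell+m-1},\tau_{\ell+m-2},\tau_{\ell+m-3}$ in place of $\tau_\ell,\tau_{\ell-1},\tau_{\ell-2}$, together with the observation that the first fact is precisely what confines $\mu\setminus\{\mu_\ell\}$ to the level-$(\ell-1)$ region in the first case. I would also handle the base of the induction with care: the level $\ell=1$ is degenerate, since there $\cal P^1_{\rm d}(\Omega_m)=\cal P^1_\tau(\Omega_m)=\Omega_m$ and the region $\{m-1\ge\mu_1\ge\mu_\ell\ge 2\}$ does not describe the intersection, so the induction should begin just above this level, with the small levels on which the region is empty serving as vacuous base cases. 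Finally I would record the $\tau_{\ell+m-1}$-stability of $\cal P^\ell(\Omega_m)$, immediate from $\cal P^\ell_\tau(\Omega_m)=\tau_{\ell+m-1}\cal P^\ell_{\rm d}(\Omega_m)$ and $\tau_{\ell+m-1}^2=\id$, as it is used in passing from the first equality to the second.
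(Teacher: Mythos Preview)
Your proposal is correct and follows essentially the same approach as the paper: the paper's own ``proof'' is just the sentence ``An inspection of Proposition \ref{disjt} and its Corollary shows that the proof works in our more general situation,'' and you have carried out precisely that inspection, reproving \lemref{min} and the intersection proposition with the involution indices shifted by $m-1$. You are also right that the condition $\mu_1=\ell$ in the displayed statement is a typo for $\mu_1=\ell+m-1$; this is confirmed by how the proposition is used in Section~3.3, where the bijection and the formula \eqref{inv2} only make sense with the corrected bound.
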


\subsection{}  For $s,\ell\in\bn$, $\ell\ge 1$, set \begin{gather*}\cal P^\ell(\Omega_m,s)=\{\mu\in\cal P^\ell(\Omega_m):\mu_\ell\ge s\}, \\
  e_{\ell,s}(\Omega_m)=\# \cal P^\ell(\Omega_m, s)\end{gather*} and define $e_{\ell,0}(\Omega_m)=e_{\ell,1}(\Omega_m)$. Note that by applying $\tau_{\ell+m-1}$, we get  \begin{equation}\label{inv2} e_{\ell,s}(\Omega_m)=\#\{\mu\in\cal P^\ell(\Omega_m):\mu_1\le \ell+m-s\}.\end{equation}
Clearly, $e_{1,\ell}(\Omega_m)$ is just the cardinality of $\cal P^\ell(\Omega_m)$.
 We now determine the recurrence relation and the initial conditions satisfied by the $e_{\ell,s}$.
 \begin{prop} \label{recursm}For $\ell\ge 1$ and $s\ge 0$, we have \begin{equation}\label{elm} e_{\ell,s}(\Omega_m)=\sum_{j\ge s-1}e_{\ell-1,s}(\Omega_m)= e_{\ell-1,s-1}(\Omega_m)+e_{\ell,s+1}(\Omega_m).\end{equation}  Moreover, \begin{gather}\label{ind42} e_{\ell,s}(\Omega_m)=0\ \ \ {\rm if }\  s>\ell+m-1,\\ \label{ind52} e_{\ell,\ell+m-1}(\Omega_m)=1,\\
\label{ind62} e_{1,s}(\Omega_m)=\begin{cases} m-s+1\ \ {\rm if}\ \ \ m-s+1 \geq 0,\\
0\ \  {\rm otherwise.}\end{cases}\end{gather}

 \end{prop}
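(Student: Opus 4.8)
\textbf{Proof plan for Proposition \ref{recursm}.}

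The plan is to mimic the proof of Proposition \ref{recurs}, checking that each step survives the passage from $\lambda\in\cal P^k_{\rm sq}$ to the set $\Omega_m$, since the ambient rectangles now have width $\ell+m-1$ rather than $\ell$. First I would establish the recurrence \eqref{elm}. By the definition of $\cal P^\ell_{\rm d}(\Omega_m)$ we have the disjoint decomposition $\cal P^\ell_{\rm d}(\Omega_m,s)=\bigsqcup_{j\ge s}\{(\mu:j):\mu\in\cal P^{\ell-1}(\Omega_m,j)\}$, so $\#\cal P^\ell_{\rm d}(\Omega_m,s)=\sum_{j\ge s}e_{\ell-1,j}(\Omega_m)$. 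By Proposition \ref{disjt2}, $\cal P^\ell(\Omega_m,s)=\cal P^\ell_{\rm d}(\Omega_m,s)\sqcup\{\mu\in\cal P^\ell_\tau(\Omega_m,s):\mu_1=\ell\}$, and $\tau_{\ell+m-1}$ gives a bijection from $\{\mu\in\cal P^\ell_\tau(\Omega_m):\mu_1=\ell,\ \mu_\ell\ge s\}$ to $\{\mu\in\cal P^\ell_{\rm d}(\Omega_m):\mu_\ell=1,\ \mu_1\le \ell+m-s\}$; peeling off the last part $1$ and using \eqref{inv2} identifies this set with $\{\nu\in\cal P^{\ell-1}(\Omega_m):\nu_1\le (\ell-1)+m-(s-1)\}$, which has cardinality $e_{\ell-1,s-1}(\Omega_m)$. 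Adding the two contributions yields $e_{\ell,s}(\Omega_m)=\sum_{j\ge s-1}e_{\ell-1,j}(\Omega_m)=e_{\ell-1,s-1}(\Omega_m)+e_{\ell,s+1}(\Omega_m)$, where the last equality is just the telescoping identity $\sum_{j\ge s-1}=e_{\ell-1,s-1}+\sum_{j\ge s}$.

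Next I would handle the boundary conditions. For \eqref{ind42} and \eqref{ind52} the key point is the analogue of Lemma \eqref{ind1}–\eqref{ind3}: by induction on $\ell$ one shows $\mu\in\cal P^\ell(\Omega_m)\implies \mu_\ell\le\ell+m-1$ and $\mu_1\ge 1$ (trivial), using that $\mu\in\cal P^\ell_{\rm d}$ forces $\mu_\ell\le\mu_{\ell-1}\le (\ell-1)+m-1$ while $\mu\in\cal P^\ell_\tau$ forces $(\ell+m-1)+1-\mu_\ell\ge 1$, i.e. $\mu_\ell\le \ell+m-1$. This gives \eqref{ind42}. For \eqref{ind52}, one checks exactly as in \eqref{ind2}–\eqref{ind3} that $\{\mu\in\cal P^\ell(\Omega_m):\mu_\ell=\ell+m-1\}\subset\{\mu:\mu_1=\ell\}$ and that $\mu\mapsto \tau_{\ell-1+m-1}(\tau_{\ell+m-1}\mu\setminus\{1\})$ — equivalently $\mu\mapsto(\mu_2-1\ge\cdots\ge\mu_\ell-1)$ — gives a bijection onto $\{\mu\in\cal P^{\ell-1}(\Omega_m):\mu_\ell=\ell+m-2\}$; iterating down to $\ell=1$ and using that $\{\mu\in\cal P^1(\Omega_m):\mu_1=m\}=\{\{m\}\}$ is a singleton proves $e_{\ell,\ell+m-1}(\Omega_m)=1$. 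Finally \eqref{ind62} is a direct computation from the base case $\cal P^1(\Omega_m)=\Omega_m\cup\tau_m\Omega_m=\{\{1\},\dots,\{m\}\}$: for $1\le s\le m$ the set $\{\mu\in\cal P^1(\Omega_m):\mu_1\ge s\}=\{\{s\},\dots,\{m\}\}$ has $m-s+1$ elements, $e_{1,0}=e_{1,1}=m$, and $e_{1,s}=0$ for $s>m$.

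The main obstacle is bookkeeping the shifted widths: unlike in Section 1 where $\tau_\ell$ acts on $\cal P^\ell$, here $\tau_{\ell+m-1}$ acts on $\cal P^{\ell,\ell+m-1}$, so every appeal to \eqref{inv}-type duality must be replaced by \eqref{inv2}, and one must be careful that when passing from level $\ell$ to level $\ell-1$ the relevant rectangle width drops by exactly one, so the bound $\mu_1\le\ell+m-s$ at level $\ell$ becomes $\nu_1\le(\ell-1)+m-(s-1)$ at level $\ell-1$ — these two expressions agree, which is precisely what makes the recurrence close. Once one has verified that Proposition \ref{disjt} and its corollary go through verbatim with $\ell$ replaced by $\ell+m-1$ in the role of the rectangle width (as the text already asserts), the rest is a routine transcription of the Section 2 argument, and the theorem $\#\cal P^\ell(\Omega_m)=e_{\ell,1}(\Omega_m)=b_{\ell,m-1}$ then follows by checking that $e_{\ell,s}(\Omega_m)=b_{\ell-s,\,m+s-1}$ solves \eqref{elm} with the stated initial conditions, exactly as in the paragraph following Proposition \ref{recurs}.
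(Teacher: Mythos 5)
Your proof of the proposition itself is correct and follows essentially the same route as the paper: the recurrence \eqref{elm} is read off from Proposition \ref{disjt2} together with the duality \eqref{inv2}, and the initial conditions come from the bound $\mu_1\le \ell+m-1$ and the base case $\cal P^1(\Omega_m)=\Omega_m$. Two points, one cosmetic and one substantive.

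Cosmetic: the condition singling out the $\tau$--piece should be $\mu_1=\ell+m-1$, not $\mu_1=\ell$, since the ambient rectangle now has width $\ell+m-1$; the paper's own statement of Proposition \ref{disjt2} contains the same slip, and your subsequent computation --- applying $\tau_{\ell+m-1}$ to land in $\{\nu:\nu_\ell=1\}$ and peeling off the final part $1$ --- only works with the corrected condition, so you are implicitly using the right version. Likewise in your treatment of \eqref{ind52} the inclusion should read $\mu_1=\ell+m-1$. The paper's argument for \eqref{ind52} is more direct than your bijection: $\mu_\ell=\ell+m-1$ together with $\mu_1\le\ell+m-1$ forces $\mu$ to be the constant partition $(\ell+m-1\ge\cdots\ge\ell+m-1)=\tau_{\ell+m-1}(1\ge\cdots\ge 1)$, which lies in $\cal P^\ell_\tau(\Omega_m)$ because the all--ones partition is visibly in $\cal P^\ell_{\rm d}(\Omega_m)$; hence the set is a singleton. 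Your iterated-bijection version also works, but is more machinery than needed.

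Substantive: your closing claim that the theorem then follows ``by checking that $e_{\ell,s}(\Omega_m)=b_{\ell-s,\,m+s-1}$ solves \eqref{elm} with the stated initial conditions, exactly as in Section 2'' is false. That formula gives $e_{\ell,1}(\Omega_m)=b_{\ell-1,m}$, which is not $b_{\ell,m-1}$ (e.g.\ $b_{1,2}=3\ne 5=b_{2,1}$). The correctly shifted guess $b_{\ell-s+1,\,m+s-2}$ does satisfy the recurrence via $b_{\ell,m}=b_{\ell,m-1}+b_{\ell-1,m+1}$, but it fails the boundary condition \eqref{ind52}: the first index becomes negative for $s$ near $\ell+m-1$ once $m\ge 3$. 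This is precisely why, unlike in Section 2, the paper needs a two--case closed formula (a single binomial coefficient for $s\ge\ell$, an alternating sum of ballot numbers for $1\le s\le\ell-1$) together with the identity \eqref{ballotrecurs} to finish. The proposition is fine as you prove it, but the passage from it to the theorem is not the routine transcription you describe.
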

 \begin{pf} It is immediate from Proposition \ref{disjt2} and  \eqref{inv2} that \eqref{elm} holds.
Equation \eqref{ind42} holds  since by definition  $\mu\in \mathcal P^\ell(\Omega_m)$ implies $\mu_1\leq \ell+r-1$.
Let $\mu\in\mathcal P^{\ell}(\Omega_m)$ be such that $\mu_\ell\geq \ell+r-1$. Then we must have,
 $\mu=(\ell+r-1\geq\ell+r-1\geq\ldots\geq\ell+r-1)$. Further, this element is $\tau_{\ell+r-1}(1\geq1\ldots\geq1)$, and we clearly have $(1\geq1\geq\ldots\geq 1)\in \mathcal P_{\rm d}^\ell(\Omega_m)$. This proves \eqref{ind52}.
 Finally, equation \eqref{ind62} is an immediate consequence of the definitions of $e_{\ell,s}(\Omega_m)$ and of $\Omega_m$.

 \end{pf}

\subsection{} It is clear that the integers $e_{\ell,s}(\Omega_m)$ are completely determined by Proposition \ref{recursm}. The proof of the theorem is completed by the following proposition which gives closed formulae for the $e_{\ell,s}(\Omega_m)$.
\begin{prop} For $\ell,m\ge 1$ and $s\ge 0$,  we have $$e_{\ell,s}(\Omega_m)=\begin{cases} \binom{m+2\ell-s-1}{\ell},\ \ \ s\ge \ell\\ \\
\sum_{j\ge 0}(-1)^j\binom{s-j}{j}b_{\ell-j,m-1},\ \ 1\le s\le\ell-1.\end{cases}$$ In particular, $e_{\ell,1}(\Omega_m)=b_{\ell,m-1}$.
\end{prop}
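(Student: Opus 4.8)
The plan is to verify that the proposed closed formulae satisfy the recurrence relation \eqref{elm} together with the initial/boundary conditions \eqref{ind42}, \eqref{ind52}, \eqref{ind62} of Proposition \ref{recursm}; since those data determine the $e_{\ell,s}(\Omega_m)$ uniquely, this suffices. The argument naturally splits into two regimes according to whether $s\ge\ell$ or $1\le s\le\ell-1$, with a careful treatment of the boundary $s=\ell$ where the two formulae must agree, and of $s=\ell-1$ and $s=0$ where the recurrence straddles the two regimes.

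First I would dispose of the range $s\ge\ell$. Here I claim $e_{\ell,s}(\Omega_m)=\binom{m+2\ell-s-1}{\ell}$, which I would prove by induction on $\ell$ using the form $e_{\ell,s}(\Omega_m)=e_{\ell-1,s-1}(\Omega_m)+e_{\ell,s+1}(\Omega_m)$ from \eqref{elm}. For $\ell=1$ and $s\ge 1$ the formula reads $\binom{m-s+1}{1}=m-s+1$, matching \eqref{ind62} (and for $s>m$ the binomial coefficient vanishes, matching the ``otherwise'' case). For the inductive step with $s\ge\ell$, note $s-1\ge\ell-1$ so the induction hypothesis applies to $e_{\ell-1,s-1}$, and $s+1>\ell$ so I may use the same formula for $e_{\ell,s+1}$ provided $s+1\le\ell+m-1$; the binomial identity $\binom{r}{s}=\binom{r-1}{s}+\binom{r-1}{s-1}$ then closes the step, while the boundary case $s=\ell+m-1$ is exactly \eqref{ind52} since $\binom{m+2\ell-s-1}{\ell}=\binom{\ell}{\ell}=1$, and $s>\ell+m-1$ gives a vanishing binomial coefficient, matching \eqref{ind42}.

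Next, for $1\le s\le\ell-1$ I would again induct on $\ell$, using $e_{\ell,s}(\Omega_m)=e_{\ell-1,s-1}(\Omega_m)+e_{\ell,s+1}(\Omega_m)$. The subtle point is that when $s=\ell-1$ the term $e_{\ell,s+1}=e_{\ell,\ell}$ is governed by the \emph{first} formula, namely $\binom{m+\ell-1}{\ell}=b_{\ell,m-1}+b_{\ell-1,m}=\cdots$, and when $s=1$ the term $e_{\ell-1,s-1}=e_{\ell-1,0}$ is, by definition, $e_{\ell-1,1}=b_{\ell-1,m-1}$. I would check that the proposed alternating sum $\sum_{j\ge 0}(-1)^j\binom{s-j}{j}b_{\ell-j,m-1}$ is consistent with the first formula at $s=\ell$ via the identity $\sum_{j\ge0}(-1)^j\binom{\ell-j}{j}b_{\ell-j,m-1}=\binom{m+\ell-1}{\ell}$ — this is precisely the ballot-number generalization of the classical alternating Catalan identity (of \cite{Aigner}) that the introduction promises will be proved in this section, so I would either invoke that identity or prove it here by the same generating-function/recurrence bookkeeping. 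Granting it, the inductive step for $1\le s\le\ell-2$ is the purely formal manipulation
\[
\sum_{j\ge0}(-1)^j\binom{s-1-j}{j}b_{\ell-1-j,m-1}+\sum_{j\ge0}(-1)^j\binom{s+1-j}{j}b_{\ell-j,m-1}
=\sum_{j\ge0}(-1)^j\binom{s-j}{j}b_{\ell-j,m-1},
\]
obtained by reindexing the first sum ($j\mapsto j-1$) and applying $\binom{s-j}{j}=\binom{s-1-j}{j}+\binom{s-1-j}{j-1}=\binom{s-1-j}{j}+\binom{s+1-(j)}{\,j\,}$-type Pascal bookkeeping; the endpoint cases $s=\ell-1$ and $s=1$ use the boundary identifications just described. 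Finally, setting $s=1$ in the second formula gives $e_{\ell,1}(\Omega_m)=\binom{1}{0}b_{\ell,m-1}=b_{\ell,m-1}$, which is the assertion of Theorem 2.

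I expect the main obstacle to be the alternating ballot identity that glues the two regimes together at $s=\ell$, i.e. showing $\sum_{j\ge0}(-1)^j\binom{\ell-j}{j}b_{\ell-j,m-1}=\binom{m+\ell-1}{\ell}$ and, more generally, that the alternating-sum formula for $1\le s\le\ell-1$ limits correctly onto the binomial formula as $s\to\ell$; once that compatibility and the two boundary identifications ($e_{\ell,0}=e_{\ell,1}$ and the $s=\ell+m-1$ value) are in hand, everything else is Pascal's rule and induction. The remaining bookkeeping — checking convergence/finiteness of the sums (only finitely many $j$ contribute since $\binom{s-j}{j}=0$ once $2j>s$, and $b_{\ell-j,m-1}=0$ once $j>\ell$) — is routine and I would state it without belaboring it.
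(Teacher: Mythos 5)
Your proposal is correct and follows essentially the same route as the paper: verify that the closed formulae satisfy the recurrence \eqref{elm} and the initial conditions \eqref{ind42}--\eqref{ind62} (Pascal's rule for $s\ge\ell$, a reindex-plus-Pascal manipulation for $1\le s\le\ell-2$), with the case $s=\ell-1$ reducing to the alternating ballot identity $\sum_{j\ge0}(-1)^j\binom{\ell-j}{j}b_{\ell-j,m-1}=\binom{m+\ell-1}{\ell}$. You correctly flag that identity as the one genuine obstacle; the paper isolates it as a separate lemma and proves it by a double induction on $m$ and $\ell$, so to be complete you would need to supply that proof rather than merely invoke it.
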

\begin{pf}Notice first that the numbers on the right hand side satisfy \eqref{ind42}, \eqref{ind52} and \eqref{ind62}. The proposition follows if we prove that they also satisfy \eqref{elm}. If $s\ge \ell$ (resp. $s<\ell-1)$, then we must check that \begin{gather*}\binom{m+2\ell-s-1}{\ell}=\binom{m+2\ell-s-2}{\ell-1}+\binom{m+2\ell-s-2}{\ell},\end{gather*} and \begin{gather*}\sum_{j\ge 0}(-1)^j\binom{s-j}{j}b_{\ell-j,m-1}=\sum_{j\ge 0}(-1)^j\binom{s-j-1}{j}b_{\ell-j-1,m-1}+\sum_{j\ge 0}(-1)^j\binom{s-j+1}{j}b_{\ell-j,m-1}.
\end{gather*} The first one is just the usual binomial identity,
 while for the second, observe that \begin{eqnarray*}\sum_{j\ge 0}(-1)^j\binom{s-j}{j}b_{\ell-j,m-1}-\sum_{j\ge 0}(-1)^j\binom{s-j+1}{j}b_{\ell-j,m-1}&=\sum_{j\ge 1}(-1)^{j+1}\binom{s-j}{j-1}b_{\ell-j,m-1}\\ &=\sum_{j\ge 0}(-1)^j \binom{s-j-1}{j}b_{\ell-j+1,m-1}.\end{eqnarray*} It remains to consider the case when $s=\ell-1$, i.e. we have to verify that
 $$\sum_{j\ge 0}(-1)^j\binom{\ell-1-j}{j}b_{\ell-j,m-1}=\sum_{j\ge 0}(-1)^j\binom{\ell-j-2}{j}b_{\ell-j-1,m-1} +\binom{m+\ell-1}{\ell}.$$ This amounts to proving (by replacing $j$ with $j+1$ on the right hand side and using the binomial identity again) that \begin{equation*}\sum_{j\ge 0}(-1)^j\binom{\ell-j}{j}b_{\ell-j,m-1}=\binom{m+\ell-1}{\ell}.\end{equation*}

\pagebreak

\noindent
This is probably well--known  but we isolate it as a separate Lemma and give a proof, since we were unable to find a reference in general.

\end{pf}
\subsection{}\begin{lem} For $\ell,m\ge 0$, we have \begin{equation}\label{ballotrecurs}\sum_{j\ge 0}(-1)^j\binom{\ell-j}{j}b_{\ell-j,m}=\binom{m+\ell}{\ell}.\end{equation}
\end{lem}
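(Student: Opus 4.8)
Proof plan for the lemma $\sum_{j\ge 0}(-1)^j\binom{\ell-j}{j}b_{\ell-j,m}=\binom{m+\ell}{\ell}$.

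The plan is to proceed by induction on $\ell$, with $m\ge 0$ arbitrary, using the recursion $b_{\ell,m}=b_{\ell,m-1}+b_{\ell-1,m+1}$ from \eqref{ballotr} together with the Pascal identity for binomial coefficients. First I would dispose of the base cases $\ell=0$ and $\ell=1$: for $\ell=0$ the left side is $b_{0,m}=1=\binom{m}{0}$, and for $\ell=1$ it is $b_{1,m}=\binom{m+2}{1}-\binom{m+2}{0}=m+1=\binom{m+1}{1}$. For the inductive step, let $S(\ell,m)$ denote the left-hand side. The idea is to expand the summand using $\binom{\ell-j}{j}=\binom{\ell-1-j}{j}+\binom{\ell-1-j}{j-1}$, splitting $S(\ell,m)$ into two sums; in the second sum reindex $j\mapsto j+1$ so that it becomes $-\sum_{j\ge 0}(-1)^j\binom{\ell-2-j}{j}b_{\ell-1-j,m}$. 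The first sum is $\sum_{j\ge 0}(-1)^j\binom{\ell-1-j}{j}b_{\ell-j,m}$, which is not directly an instance of $S$ because the subscript on $b$ is off by one from the binomial argument; to fix this I would instead apply the $b$-recursion $b_{\ell-j,m}=b_{\ell-j,m-1}+b_{\ell-j-1,m+1}$ inside that sum.

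Carrying this out, after the substitution the first sum breaks as $\sum_{j\ge 0}(-1)^j\binom{\ell-1-j}{j}b_{\ell-j,m-1} + \sum_{j\ge 0}(-1)^j\binom{\ell-1-j}{j}b_{\ell-1-j,m+1}$. The second of these is exactly $S(\ell-1,m+1)$, which by induction equals $\binom{m+\ell}{\ell-1}$. The remaining pieces — namely $\sum_{j\ge 0}(-1)^j\binom{\ell-1-j}{j}b_{\ell-j,m-1}$ minus $\sum_{j\ge 0}(-1)^j\binom{\ell-2-j}{j}b_{\ell-1-j,m}$ — should be arranged to telescope or to collapse to $S(\ell-1,m)=\binom{m+\ell-1}{\ell-1}$, whereupon $S(\ell,m)=\binom{m+\ell-1}{\ell-1}+\binom{m+\ell}{\ell-1}$, and I would want this to equal $\binom{m+\ell}{\ell}$. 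Since $\binom{m+\ell}{\ell}=\binom{m+\ell-1}{\ell}+\binom{m+\ell-1}{\ell-1}$, matching these forces the bookkeeping to produce $\binom{m+\ell-1}{\ell}$ rather than $\binom{m+\ell}{\ell-1}$ from the leftover sums; so the precise regrouping of the two leftover sums (and possibly a further application of the $b$-recursion or a nested induction on $m$ as well) is where care is needed.

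The main obstacle I anticipate is exactly this combinatorial bookkeeping: the natural expansions produce sums whose binomial arguments and $b$-subscripts are shifted relative to each other, so none of the intermediate sums is literally an instance of $S(\ell',m')$ without first invoking the $b$-recursion to realign them. The cleanest route is likely a double induction on $\ell+m$ (or on $\ell$ with $m$ free), applying $b_{\ell,m}=b_{\ell,m-1}+b_{\ell-1,m+1}$ to every term at the outset to split $S(\ell,m)=S'(\ell,m)+S(\ell-1,m+1)$ where $S'(\ell,m)=\sum_{j}(-1)^j\binom{\ell-j}{j}b_{\ell-j,m-1}$, and then separately showing $S'(\ell,m)=\binom{m+\ell-1}{\ell}$ by the same Pascal-expansion argument reduced to lower $\ell$. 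Once the shifts are tracked correctly all the pieces are routine binomial identities, so the substance of the proof is entirely in choosing the induction parameter and the order of the two recursions.
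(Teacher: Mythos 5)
Your base cases and general strategy (induction on $\ell$ with $m$ free, combining the Pascal identity with a ballot-number recursion) are in the right spirit, but neither of the two routes you sketch actually closes, and the one you call ``cleanest'' rests on a false identity. Writing $S(\ell,m)=\sum_{j\ge 0}(-1)^j\binom{\ell-j}{j}b_{\ell-j,m}$, your proposed split $S(\ell,m)=S(\ell,m-1)+S(\ell-1,m+1)$ fails for exactly the reason you flagged earlier in your own write-up: applying $b_{\ell-j,m}=b_{\ell-j,m-1}+b_{\ell-j-1,m+1}$ termwise produces $\sum_{j\ge 0}(-1)^j\binom{\ell-j}{j}b_{\ell-1-j,m+1}$, in which the binomial top has not shifted along with the ballot subscript, so this is \emph{not} $S(\ell-1,m+1)$. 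Concretely, for $\ell=2$, $m=1$ one has $S(2,1)=b_{2,1}-b_{1,1}=5-2=3=\binom{3}{2}$, while $S(2,0)+S(1,2)=1+3=4$. Your first route is likewise stuck where you say it is: the leftover bracket $\sum_j(-1)^j\binom{\ell-1-j}{j}b_{\ell-j,m-1}-\sum_j(-1)^j\binom{\ell-2-j}{j}b_{\ell-1-j,m}$ would have to equal $\binom{m+\ell}{\ell}-\binom{m+\ell}{\ell-1}$, not $S(\ell-1,m)=\binom{m+\ell-1}{\ell-1}$, and it is not an instance of $S$ at all. So as it stands there is a genuine gap: no version of the identity you need has actually been established.

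The paper resolves the index mismatch by using the \emph{other} consequence of the definition, $b_{\ell,m}=b_{\ell+1,m-1}-b_{\ell+1,m-2}$, which raises the first index by one and therefore compensates for the shift introduced by Pascal's rule. This yields the clean recursion $S(\ell+1,m)=S(\ell+2,m-1)-S(\ell+2,m-2)+S(\ell,m)$, in which every term on the right really is an instance of $S$. The price is a different induction scheme: an outer (downward) induction on $m$, with base case $m=0$ being the known alternating Catalan identity and $m=1$ handled separately (since $m-2=-1$ there, one uses $b_{\ell,1}=b_{\ell+1,0}$ and $b_{\ell,-1}=0$), together with an inner induction on $\ell$; the outer induction on $m$ is what allows the terms $S(\ell+2,m-1)$ and $S(\ell+2,m-2)$ at \emph{larger} $\ell$ to be used. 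If you want to repair your argument, replacing your use of $b_{\ell,m}=b_{\ell,m-1}+b_{\ell-1,m+1}$ by this identity, and restructuring the induction accordingly, is the step you are missing.
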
 \begin{pf} Note that if $m=0$, then this formula is known for all $\ell$, \cite{Aigner} since the $b_{\ell,0}$ are Catalan numbers. Assume now that we have proved it for all  pairs $(\ell,m')$ with  $m'<m$. To prove it for $(\ell,m)$ we proceed again by induction on $\ell$. If $\ell=0$, the equation is just $b_{0,m}=1$ which follows from the definition.
Assuming the result for $(\ell,m)$, we prove it for $(\ell+1,m)$ as follows. Consider: \begin{eqnarray*}\sum_{j\ge 0}(-1)^j\binom{\ell+1-j}{j}b_{\ell+1-j,m}-\sum_{j\ge 0}(-1)^j\binom{\ell+2-j}{j}(b_{\ell+2-j,m-1}-b_{\ell+2-j, m-2})&\\ =
\sum_{j\ge 1}(-1)^j\binom{\ell+1-j}{j}b_{\ell+1-j,m}-\sum_{j\ge 1}(-1)^j\binom{\ell+2-j}{j}(b_{\ell+2-j,m-1}-b_{\ell+2-j, m-2})&\\ =\sum_{j\ge 1}(-1)^j\binom{\ell+1-j}{j}(b_{\ell+1-j,m}-b_{\ell+2-j,m-1}+b_{\ell+2-j, m-2})\\ + \sum_{j\ge 1}(-1)^j\binom{\ell+1-j}{j-1}(b_{\ell+2-j,m-1}-b_{\ell+2-j, m-2})\\ = 0+\sum_{j\ge 0}(-1)^j\binom{\ell-j}{j}b_{\ell-j,m},\end{eqnarray*} i.e.

\[ \sum_{j\ge 0}(-1)^j\binom{\ell+1-j}{j}b_{\ell+1-j,m}\] \[= \sum_{j\ge 0}(-1)^j\binom{\ell+2-j}{j}(b_{\ell+2-j,m-1}-b_{\ell+2-j, m-2})+\sum_{j\ge 0}\binom{\ell-j}{j}b_{\ell-j,m}. \]

\noindent For the inductive step to work, we must have the result for $m=1$ as well. For this, note  that $b_{\ell+1-j,1}=b_{\ell+2-j,0}$ and $b_{\ell,-1}=0$ for all $\ell$.  Hence, we get \begin{gather*} \sum_{j\ge 0}(-1)^j\binom{\ell+1-j}{j}b_{\ell+1-j,1}=\sum_{j\ge 0}(-1)^j\binom{\ell+2-j}{j}b_{\ell+2-j,0}+\sum_{j\ge 0}\binom{\ell-j}{j}b_{\ell-j,1}\\ \\= 1+\ell\end{gather*} as required.
In the general case, the induction hypothesis applies to all the terms on the  right hand side and we get

$$\sum_{j\ge0}(-1)^j\binom{\ell+1-j}{j}b_{\ell+1-j,m}=\binom{r+\ell+1}{\ell+2}-\binom{m+\ell}{\ell+2}+\binom{m+\ell}{\ell}=\binom{m+\ell+1}{\ell+1},$$ and the proof is complete.

\end{pf}

\section{Concluding Remarks and a Conjecture.}

 \subsection{} As we mentioned in the introduction, our motivation for this paper came from the study of the representation theory of affine Lie algebras. There is a well--known relationship \cite{G}, \cite{BCP},\cite{CP} between the ring of symmetric functions in infinitely many variables and the universal enveloping algebra of the affine Lie algebra. The problem we are interested in leads us naturally through this connection to the following conjecture.

 For $r\ge 1$, let $\bc[x_1,\cdots, x_r]$ be the polynomial ring in $r$--variables, $S_r$ be the symmetric group on $r$ letters, and let $$\Lambda_r=\bc[x_1,\cdots, x_r]^{S_r}$$ be the ring of invariants under the canonical action of $S_r$ on the polynomial ring.  Given elements $a,b\in \bc[x_1,\cdots, x_r]$ and $m\ge 0$, set $$\bop_0(a,b)=1,\ \ \bop_m(a,b)=  \sum_{j=0}^{m-1}a^{m-j-1}b^j.$$ Given a partition $\mu=\mu_1\ge\cdots\ge\mu_s>0$  let ${\rm comp}(\mu)\subset \bz_+^{s}$ be the set of all  distinct elements arising from permutations of $(\mu_1,\cdots,\mu_s)\in\bz_+^s$.

 From now on, we consider the case when   $r=2\ell+m$ for some $\ell,m\ge 1$. Given $\mu\in\cal P^\ell$, we set $$\bop(\mu)=\sum_{\mu'\in{\rm comp}(\mu)}\bop_{\mu'_1} (x_1,x_2)\cdots\bop_{\mu'_\ell}(x_{2\ell-1},x_{2\ell}).$$ Let $\bm(\ell,m)$ be the $\Lambda_r$--submodule of $\bc[x_1,\cdots, x_r]$ spanned by the elements $\bop(\mu)$, $\mu\in\cal P^\ell$. We can now state our conjecture.

\noindent {\bf Conjecture} The $\Lambda_r$--module $\bm(\ell,m)$ is free with basis $$\{\bop_\mu:\mu\in\cal P^\ell(\Omega_m)\},$$ and in particular is of rank $b_{\ell,m-1}$.

 We have checked that the conjecture is true for all $m$ if $\ell=1,2$ and for $\ell=3,4$ for $m=0,1,2$.

 \subsection{} There are other natural generalizations of the algorithm. Namely, we could start with any partition $\lambda\in\cal P$ and define a subset $\cal Q(\lambda)$ by setting $$\cal Q^1(\lambda)=\{\lambda\}\cup\{\tau_{\lambda_1+\lambda_k-1}(\lambda)\},$$ and then defining $\cal Q^\ell_{\rm d}(\lambda)$ in the obvious way and $$\cal Q^\ell_\tau(\lambda)=\tau_{\lambda_1+\lambda_k+\ell-1}\cal Q^\ell_d(\lambda).$$ Computations for small values of $\ell$ and specific $\lambda$ do yield sequences of numbers found in \cite{S} for the  cardinality of the sets. The abstract result needed, however, to compute the recurrence relations in general is the analog of Corollary \ref{disjt}. The Corollary is  definitely false in this generality and it should be interesting to find  the correct statement.

\end{document}